\documentclass[11pt,letter,reqno]{amsart}
%
\usepackage{setspace}

\usepackage{amssymb,amsmath,amsfonts,amsthm,commath}
\usepackage{graphicx}
\usepackage{float}
\usepackage{framed}
\usepackage{calc,extarrows}
\usepackage{xcolor}

\usepackage{amsfonts}
\usepackage{amssymb}
\usepackage{graphicx}
\usepackage{newlfont}
\usepackage{mathrsfs}
\usepackage{graphics}
\usepackage{amsmath, amssymb, amsfonts,amsthm}
\usepackage{textcomp}
\usepackage{mathtools}
\usepackage{multicol}
\usepackage{bbm}
\usepackage{float}
\usepackage{enumerate}

\usepackage{hyperref}

\usepackage[numbers,sort&compress]{natbib}
\usepackage{marginnote}




%
%

%
%

\newtheorem{theorem}{Theorem}[section]
\newtheorem{corollary}[theorem]{Corollary}

\newtheorem{proposition}[theorem]{Proposition}

\theoremstyle{remark}

\newcommand{\cd}{\ \stackrel{d}{\longrightarrow} \ }

\newcommand{\cas}{\ \stackrel{\mbox{a.s.}}{\longrightarrow} \ }

\newcommand{\GG}{\mbox{${\mathcal G}$}}

\newcommand{\Var}{\mathbf{Var}}

\newcommand{\sfrac}[2]{{\textstyle\frac{#1}{#2}}}

\newcommand{\bone}{\mathbf{1}}

\numberwithin{equation}{section}

\newtheorem{lemma}[theorem]{Lemma}






\DeclarePairedDelimiter\floor{\lfloor}{\rfloor}

\begin{document}

\title{De-Preferential Attachment Random Graphs}
\author{Antar Bandyopadhyay} 
\address[Antar Bandyopadhyay]{Theoretical Statistics and Mathematics Unit \\
         Indian Statistical Institute, Delhi Centre \\ 
         7 S. J. S. Sansanwal Marg \\
         New Delhi 110016 \\
         INDIA}
\address{Theoretical Statistics and Mathematics Unit, 
         Indian Statistical Institute, Kolkata;
         203 B. T. Road, Kolkata 700108, INDIA}
\email{antar@isid.ac.in}          
\author{Subhabrata Sen}  
\address[Subhabrata Sen]{Department of Statistics\\
         Harvard University\\
         1 Oxford Street, SC 713\\
         Cambridge, MA 02138\\
         USA}
\email{subhabratasen@harvard.edu}

\begin{abstract}
\noindent
In this work we consider a growing random graph sequence where a new vertex is less likely to join 
to an existing vertex with high degree and more likely
to join to a vertex with low degree. In contrast to the well studied 
\emph{preferential attachment random graphs} \cite{BarAlb99}, we
call such a sequence a 
\emph{de-preferential attachment random graph model}. We consider two types of models, 
namely, \emph{inverse de-preferential}, where
the attachment probabilities are inversely proportional to the degree and 
\emph{linear de-preferential}, where the attachment probabilities are
proportional to $c-$degree, where $c > 0$ is a constant.  
For the case when each new vertex comes with 
exactly one half-edge we show that the degree of a fixed vertex is asymptotically of the order 
$\sqrt{\log n}$ for the inverse de-preferential case and
of the order $\log n$ for the linear case. These show that compared to preferential attachment,  
the degree of a fixed vertex grows to infinity at a much slower rate for these models. 
We also show that in both cases limiting degree distributions have exponential tails.
In fact we show that for the inverse de-preferential model the tail of the limiting degree distribution 
is faster than exponential while that for the linear de-preferential model is exactly the 
$\mbox{Geometric}\left(\frac{1}{2}\right)$ distribution.
For the case when each new vertex comes with $m > 1$ half-edges, we show that similar 
asymptotic results hold for fixed vertex degree in both inverse and linear de-preferential models. 
Our proofs make use of the martingale approach as well as embedding to certain continuous time age dependent branching processes. 
\end{abstract}

\keywords{De-preferential attachment models, empirical degree distribution, embedding, 
          fixed vertex asymptotic, random graphs} 

\subjclass[2010]{Primary: 05C80; Secondary: 60J85}

\maketitle

\section{Introduction}
Networks are ubiquitous in our surroundings. Complex biological networks such as  protein-interaction networks, social networks, and electronic networks 
(such as the HTTP network of the WWW) form the very basis of modern human existence. Across disciplines, many scientists have attempted to study the 
properties of these complex structures. With increased computational power, it has become possible to study large real-life networks. These empirical 
studies have observed some distinctive properties (such as ``scale-free" structure and ``small-world" property) which are exhibited by a number of 
complex networks. 

Random graph models have been put forward to explain specific properties observed in complex networks. We will be specifically interested in the 
distribution of vertex degrees in large networks. Empirical studies have reported that many complex networks have degree distributions which are 
``scale-free", that is, the tail of the degree distribution decays roughly like a power-law.

The most classical model for a random graph is the Erd\'{o}s- R\'{e}nyi random graph \cite{JanLucRuc2000}. 
However, asymptotically, the tail of the vertex degree distribution in an Erd\'{o}s- R\'{e}nyi random graph decays exponentially. 
Thus these random graphs are not suitable models for real-life complex networks. This has necessitated the development of other models which would 
exhibit these unique properties observed in real networks. 

Preferential attachment random graphs, such as the Albert-Barabasi Model \cite{BarAlb99}, have emerged as a popular choice for modeling complex networks. 
In this class of models, one starts with a simple initial configuration of vertices (e.g. two vertices joined by an edge) and adds a new vertex at each 
step. In the simplest case, an existing vertex is chosen with probability proportional to some weight $w:\mathbb{N} \to \mathbb{R}^+$ of the degree of 
the vertex, and the new vertex is joined by an edge to the sampled vertex. The function $w$ is taken to be an increasing function --- 
high degree vertices are therefore more likely to be attached to the new vertex. This leads to the term ``preferential attachment". 
If $w$ is taken to be linear, the degree distribution of the resulting random graph actually exhibits the ``scale-free" property 
(see e.g. \cite{Hofs2012}).
 
In a general version of the preferential attachment model, one similarly starts with a simple configuration of vertices and adds a vertex at each step. 
However, in this case, the new vertex is attached to a fixed number $m$ of existing vertices. Often, it will be convenient to imagine that the new vertex 
has $m$- half edges attached to it, and these half-edges are attached to existing vertices. Numerous alternative models have been suggested in the 
literature for choosing $m$ vertices. For example, the vertices may be chosen in an independent and identically distributed manner with probabilities 
proportional to a function of their degrees, or the $m$ half edges attached to the new vertex may be sequentially joined to existing vertices 
(sampled with probabilities proportional to a function of their degrees) and the degrees of the vertices might be updated during the intermediate steps. 
 

Here we explore the opposite phenomenon. 
We start with a simple configuration of vertices and add a new vertex at each step. 
We study two random graph models where vertices with high degrees are less likely to be attached to the new vertex. These random graphs will be called 
``De-preferential Attachment" random graphs. Having initiated the study independently, we later discovered that such models have been discussed in Physics literature
earlier as models for food webs \cite{SevRik2006, SevRik2008}. It is interesting to note that these papers contain non-rigorous study of the model which 
leads to the same conclusions. However, it must also be stated that the papers include more results than what we have been able to
derive rigorously. It is perhaps worthwhile to continue the study to establish all the results which the physicists have derived using their
non-rigorous arguments. Similar models have also been studied mainly through simulations 
in Computer Science literature for modeling ``peer-to-peer'' networks \cite{MirFig2009}.
In the context of certain other type of random reinforced models, such as, 
urn models, the concept of ``De-preferential" selection has been studied in the recent literature \cite{BaKa2018, Ka2019}.

We will be specifically interested in the evolution of the degree of a fixed vertex and the asymtotic degree distribution of these two random graph models. 
Our results for one of the models will be critically dependent on the embedding of the discrete time random graph process into a sequence of continuous time pure 
birth processes.
\subsection{Model}
The two de-preferential attachment random graph models will be denoted by ``linear de-preferential attachment model" and 
``inverse de-preferential attachment model". For both models, we start with a simple configuration of vertices and add a single new 
vertex at each step. Initially, we allow the new vertex to join to one of the existing vertices. Later, we generalize both models and allow the new 
vertex to join to $m (\geq 1)$  existing vertices. It is worth noting that if $m > 1$ then it is possible to get \emph{loops} at a single vertex
or \emph{multiple edges} between two vertices. 
As in the case of preferential attachment random graphs, numerous alternatives may be suggested for 
choosing the $m$ vertices which are joined to the new vertex. 

In both the models, each of the $m$-half edges of a vertex is attached to one of the existing vertices. The attachment is carried out sequentially and the degrees of the existing vertices are updated during the intermediate steps. We will see that this choice allows us to couple the inverse de-preferential attachment model naturally with a continuous time age-dependent branching process for $m=1$ case and with a sequence of pure birth processes for $m>1$. This coupling will play a critical role in the analysis of the inverse de-preferential attachment model. 

Initially, our starting configuration is a ``hanging" tree consisting of two vertices joined by 
an edge and a ``free" edge linked to one of the vertices. We add vertices sequentially and attach the new vertices randomly to the existing vertices to form a sequence of growing graphs. Later, we generalize the models to include the $m>1$ scenario. In this case, our starting configuration
is a graph consisting of two vertices joined by $m$ edges and $m$ free edges linked to one of the vertices. 
\subsection{Notation} 
We introduce some notation that will be used throughout this paper. We denote the random graph process 
by $\{G_n\}_{n=2}^{\infty}$. The graph $G_n$ has
vertices $V(G_n)= \{v_1,\cdots, v_n\}$. At time $(n+1)$, we introduce the vertex $v_{n+1}$ with 
half edges $e_{n+1,1},\cdots,e_{n+1,m}$. Also, let $d_i(n+1,k)$, $k=0,\cdots,m-1$, denote the degree of the
vertex $v_i$, $i=1,\cdots,n$, after $k$ half-edges of $v_{n+1}$ have been attached to the graph. 
Here degree refers to both the \emph{in} and \emph{out}-degree of a vertex.
We will use $\{\mathscr{F}_{n,k}: n \geq 2, k=0,\cdots, m-1 \}$ to denote the natural filtration associated
with the random graph process. Finally, let $d_i(n+1,0) = d_i(n)$. For $m=1$, the natural filtration will be 
simply denoted by $\{\mathscr{F}_n:n \geq 2\}$ and the half edge of vertex $v_{n+1}$ will be denoted by $e_{n+1}$. 

Finally, let $N_k(n)$ denote the number of vertices of degree $k$ in $G_n$. $P_k(n) = \frac{N_k(n)}{n}$ denotes the empirical proportion of vertices of degree $k$ in $G_n$.

\subsubsection{Linear De-preferential}
For $m=1$, 
\begin{align*}
P(e_{n+1}=\{v_{n+1}, v_i\} | \mathscr{F}_n) &\propto \left( 1- \frac{d_i(n)}{2n-1}\right)\\
\implies P(e_{n+1}=\{v_{n+1}, v_i\} | \mathscr{F}_n) &= \frac{1}{n-1} \left( 1- \frac{d_i(n)}{2n-1}\right)\\
\end{align*}
For $m>1$, $j=1,\cdots,n$, and $k= 0,1,\cdots,m-1$,
\begin{align*}
P(e_{n+1,k+1}=\{v_j,v_{n+1}\}| \mathscr{F}_{n+1,k}) &= \frac{1}{n-1}\left(1- \frac{d_j(n+1,k)}{ k+(2n-1)m}\right)
\end{align*}

\subsubsection{Inverse De-preferential}
For $m=1$, 
\begin{align*}
 P(e_{n+1}= \{v_{n+1}, v_i\} |\mathscr{F}_{n}) &\propto \frac{1}{d_i(n)} \\
\implies P(e_{n+1}= \{v_{n+1}, v_i\} |\mathscr{F}_{n}) &= C_n \frac{1}{d_i(n)}
\end{align*}

where $C_n ^{-1} = D_n= \displaystyle\sum_{i=1} ^{n} \frac{1}{d_i(n)}$.

For $m>1$, $j=1,\cdots,n$, $k=0,\cdots,m-1$,
\begin{align*}
P(e_{n+1,k+1}=\{v_j,v_{n+1}\}| \mathscr{F}_{n+1,k}) &\propto \frac{1}{d_j(n+1,k)}\\
\implies P(e_{n+1,k+1}=\{v_j,v_{n+1}\}| \mathscr{F}_{n+1,k}) &= C_{n+1,k}\frac{1}{d_j(n+1,k)}
\end{align*}
where $C_{n+1,k}^{-1} = D_{n+1,k}= \displaystyle\sum_{j=1}^{n} \frac{1}{d_j(n+1,k)}$.

\subsection{Outline}
The rest of the paper is organized as follows. In Section~\ref{results} 
we state our main results
for both the linear and inverse de-preferential models, while the proofs are presented in Sections 
~\ref{proofs-lin} and ~\ref{proofs-inv} respectively.

\section{Main Results}
In this Section we state the main results for the two random graph models. For both models, we start with the 
results for $m=1$ case and then we will state the results for the $m>1$ case. 

\label{results}
\subsection{Linear De-preferential}
\subsubsection{$m=1$ Case}
We now state the results for the $m=1$ case.
\begin{theorem}
\label{ModelA_rate}
Let $\left(G_n\right)_{n=1}^{\infty}$ be a sequence of 
random graphs following a 
\emph{linear de-preferential attachment} model with
$m=1$. Then as $n \rightarrow \infty$,
\begin{equation}
\frac{d_i(n)}{\log n} {\buildrel P \over \longrightarrow} 1.
\end{equation}
\end{theorem}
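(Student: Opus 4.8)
The plan is to track the degree $d_i(n)$ of the fixed vertex $v_i$ directly through its increments via a Doob decomposition, exploiting the fact that a fixed vertex has degree of order $\log n$, so that the correction $d_i(n)/(2n-1)$ appearing in the attachment probability is asymptotically negligible and the per-step growth rate is essentially $1/(n-1)$. Writing $\xi_{n+1}=d_i(n+1)-d_i(n)\in\{0,1\}$ for the indicator that the half-edge of $v_{n+1}$ lands on $v_i$, the model gives
\[
E[\xi_{n+1}\mid \mathscr{F}_n]=\frac{1}{n-1}\left(1-\frac{d_i(n)}{2n-1}\right).
\]

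First I would pin down the first moment. Setting $a_n=E[d_i(n)]$ and taking expectations yields the recursion $a_{n+1}=a_n\left(1-\frac{1}{(n-1)(2n-1)}\right)+\frac{1}{n-1}$. Since the subtracted term is nonnegative, $a_{n+1}\le a_n+\frac{1}{n-1}$, so immediately $a_n\le \log n+O(1)$. Feeding this crude upper bound back into the recursion, the accumulated correction $\sum_k \frac{a_k}{(k-1)(2k-1)}=\sum_k O(\log k/k^2)$ converges, whence $a_n=\log n+O(1)$. This establishes the claimed order for $E[d_i(n)]$ without circularity.

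Next I would write the Doob decomposition $d_i(n)=d_i(i)+A_n+M_n$, where $A_n=\sum_{k=i}^{n-1}\frac{1}{k-1}\left(1-\frac{d_i(k)}{2k-1}\right)$ is the predictable compensator and $M_n=\sum_{k=i}^{n-1}\left(\xi_{k+1}-E[\xi_{k+1}\mid\mathscr{F}_k]\right)$ is a martingale with $M_i=0$. Splitting the compensator as $A_n=\sum_k\frac{1}{k-1}-\sum_k\frac{d_i(k)}{(k-1)(2k-1)}$, the first sum equals $\log n+O(1)$. For the second, its expectation is $\sum_k\frac{E[d_i(k)]}{(k-1)(2k-1)}=\sum_k O(\log k/k^2)=O(1)$ by the moment bound just obtained; being a sum of nonnegative terms, it is therefore $O_P(1)=o_P(\log n)$ by Markov's inequality, so $A_n=\log n+o_P(\log n)$. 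For the fluctuation term, each increment has conditional variance $p_k(1-p_k)\le p_k\le\frac{1}{k-1}$, so the predictable quadratic variation gives $E[M_n^2]=E\left[\sum_k \mathrm{Var}(\xi_{k+1}\mid\mathscr{F}_k)\right]=O(\log n)$, and Chebyshev yields $M_n=O_P(\sqrt{\log n})=o_P(\log n)$. Combining, $d_i(n)=\log n+o_P(\log n)$, which is exactly $\frac{d_i(n)}{\log n}\xrightarrow{P}1$.

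I expect the main obstacle to be the control of the compensator, namely showing that the random correction $\sum_k\frac{d_i(k)}{(k-1)(2k-1)}$ is genuinely $o(\log n)$ rather than contributing at leading order: the naive deterministic bound $d_i(k)\le k$ would make this term of order $\log n$ and ruin the argument. The resolution is to establish the a priori first-moment estimate $E[d_i(k)]=O(\log k)$ \emph{before} attempting any concentration, so that the potentially dangerous term is first controlled in expectation and then in probability by nonnegativity. Once that ordering of the steps is respected, everything downstream is routine summation of $O(\log k/k^2)$-type tails together with a standard $L^2$ martingale estimate.
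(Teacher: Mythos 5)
Your argument is correct and rests on the same two pillars as the paper's proof: the first-moment recursion $a_{n+1}=a_n\bigl(1-\frac{1}{(n-1)(2n-1)}\bigr)+\frac{1}{n-1}$ giving $E[d_i(n)]\sim\log n$, and a second-moment bound of order $\log n$ followed by Chebyshev. Your packaging differs only cosmetically --- you bootstrap the mean estimate instead of solving the recursion exactly via products, and your Doob decomposition with the $L^2$ bound $E[M_n^2]=O(\log n)$ plus Markov control of the nonnegative compensator correction is a slightly sharper rearrangement of the paper's recursive bound $\Var(d_i(n))\le C_0\log n$ --- so this counts as essentially the same approach.
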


Thus the degree of a fixed vertex grows like $\log n$. In Athreya et. al.(\cite{AthGhoSeth2008}), similar
rates have been derived for linear preferential attachment and sublinear preferential attachment models
when $w(k)= k^p, \frac{1}{2}<p<1$. In comparison, the
degree of a fixed vertex grows like $O(\sqrt{n})$ in linear preferential attachment, and 
like $O((\log n)^q)$, where $q=\frac{1}{1-p}$ when $w(k)= k^p , \frac{1}{2}<p<1$. Hence, the degree
of a fixed vertex grows at a slower rate in comparison to linear and this class of sub-linear preferential attachment models. 

We also have a central limit theorem for this model.
\begin{theorem}
\label{ModelA_central}
Let $\left(G_n\right)_{n=1}^{\infty}$ be a sequence of 
random graphs following a 
\emph{linear de-preferential attachment} model with
$m=1$. Then as $n \rightarrow \infty$,
\begin{equation}
\frac{d_i(n) - \log n}{\sqrt{\log n}} \cd N(0,1).
\end{equation}
\end{theorem}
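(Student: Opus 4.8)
The plan is to isolate the degree of the fixed vertex $v_i$ as a sum of Bernoulli increments, perform a Doob decomposition into a predictable compensator plus a martingale, and then invoke the martingale central limit theorem. Fix $i$ and write $\xi_n := d_i(n+1) - d_i(n) = \1\left[e_{n+1} = \{v_{n+1}, v_i\}\right]$ for $n \geq i$, so that $d_i(N) = d_i(i) + \sum_{k=i}^{N-1}\xi_k$. By the definition of the linear de-preferential model,
\begin{equation*}
p_n := \E\left[\xi_n \mid \mathscr{F}_n\right] = \frac{1}{n-1}\left(1 - \frac{d_i(n)}{2n-1}\right),
\end{equation*}
and the increments are uniformly bounded, $0 \le \xi_n \le 1$. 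Setting $M_N := \sum_{k=i}^{N-1}(\xi_k - p_k)$, a mean-zero $\{\mathscr{F}_n\}$-martingale, we obtain the decomposition $d_i(N) = d_i(i) + \sum_{k=i}^{N-1} p_k + M_N$.

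The first step is to pin down the compensator. Since $p_n = \frac{1}{n-1} - \frac{1}{n-1}\cdot\frac{d_i(n)}{2n-1}$, summation gives $\sum_{k=i}^{N-1} p_k = \log N + O(1) - R_N$, where $R_N := \sum_{k=i}^{N-1} \frac{d_i(k)}{(k-1)(2k-1)} \ge 0$ is nondecreasing in $N$. Taking expectations in the recursion yields $\E[d_i(k)] \le d_i(i) + \sum_{j=i}^{k-1}\frac{1}{j-1} = O(\log k)$, whence $\E[R_N] \le C\sum_k \frac{\log k}{k^2} < \infty$; thus $R_N$ is an $L^1$-bounded nondecreasing sequence and converges almost surely to a finite limit. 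Consequently $\sum_{k=i}^{N-1} p_k = \log N + O(1)$ with an error that is bounded in probability, so that
\begin{equation*}
\frac{d_i(i) + \sum_{k=i}^{N-1} p_k - \log N}{\sqrt{\log N}} \xrightarrow{P} 0.
\end{equation*}

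Next I would verify the hypotheses of the martingale CLT for $M_N/\sqrt{\log N}$. The predictable quadratic variation is $\langle M \rangle_N = \sum_{k=i}^{N-1} p_k(1-p_k) = \sum_{k=i}^{N-1} p_k - \sum_{k=i}^{N-1} p_k^2$; since $p_k = O(1/k)$ we have $\sum_k p_k^2 < \infty$, so by the previous step $\langle M \rangle_N / \log N \xrightarrow{P} 1$. The conditional Lindeberg condition is immediate from boundedness: for any $\varepsilon > 0$ and all $N$ large enough that $\varepsilon\sqrt{\log N} > 1$, the event $\{|\xi_k - p_k| > \varepsilon\sqrt{\log N}\}$ is empty, so the truncated second moments vanish. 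Hence the martingale CLT gives $M_N/\sqrt{\log N} \cd N(0,1)$, and combining with the compensator estimate via Slutsky's theorem yields $\frac{d_i(N) - \log N}{\sqrt{\log N}} \cd N(0,1)$.

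The main obstacle is the control of the random compensator: the term $R_N$ built from the random degrees $d_i(k)$ must be shown to be negligible on the $\sqrt{\log N}$ scale. This is exactly where the first-order rate of Theorem~\ref{ModelA_rate} (or its $L^1$ analogue $\E[d_i(n)] = O(\log n)$) is indispensable, since it guarantees that the correction coming from the $\frac{d_i(k)}{2k-1}$ factor stays summable and does not contaminate the Gaussian fluctuations. Everything else (the Lindeberg condition, the variance computation) is routine once the increments are recognized as asymptotically independent $\mathrm{Bernoulli}(1/k)$ variables, whose normalized sum obeys the classical central limit theorem with mean and variance both equal to $\log N$.
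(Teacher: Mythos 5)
Your proposal is correct and follows essentially the same route as the paper: a Doob decomposition of $d_i(n)$ into a predictable compensator plus a martingale, verification that the sum of conditional variances is asymptotic to $\log n$ and that the Lindeberg condition holds trivially from the bounded increments, and a separate argument that the compensator deviates from $\log n$ by a term negligible on the $\sqrt{\log n}$ scale (the paper bounds the analogous correction term $I_{12}$ in expectation and applies Markov's inequality, while you note the monotone $L^1$-bounded term $R_N$ converges almost surely; both rest on the same input $\E[d_i(k)] = O(\log k)$). The paper carries this out for general $m$ with a doubly indexed filtration and specializes to $m=1$, but the argument is the same.
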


The next theorem gives the limit of the empirical distribution of vertex degrees.
\begin{theorem}
\label{ModelA_degree}
Let $\left(G_n\right)_{n=1}^{\infty}$ be a sequence of 
random graphs following a 
\emph{linear de-preferential attachment} model with
$m=1$. 
Let 
\begin{equation}
P_k(n) := \frac{1}{n} \sum_{i=1}^n \bone\left(d_i(n) = k\right),
\end{equation}
be the fraction of vertices with degree $k$. 
Then $\exists\,\, C_1 > 0$, such that, as $n \rightarrow \infty$
\begin{equation}
P\left(\displaystyle\max_k \left\vert P_k(n) - p_k \right\vert > C_1 \left(\frac{\log n}{n} + \sqrt{\frac{\log n}{n}}\right) \right)= o(1),
\end{equation}
where $p_k = \frac{1}{2^k}$, $k\geq 1$.
\end{theorem}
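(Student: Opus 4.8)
The plan is to follow the degree counts $N_k(n)$ through a linear recursion whose fixed point is $p_k=2^{-k}$, and then to control the pathwise deviation $U_k(n):=N_k(n)-np_k$ by an induction on $k$ driven by a martingale with bounded increments. First I would record the one-step dynamics. Adding $v_{n+1}$ contributes a new vertex of degree $1$, so $N_1\mapsto N_1+1$ deterministically, while the single attachment moves exactly one existing vertex up by one degree. Hence for $k\ge 2$ the count $N_k$ gains $1$ when a degree-$(k-1)$ vertex is selected and loses $1$ when a degree-$k$ vertex is selected, and for $k=1$ it loses $1$ when a degree-$1$ vertex is selected; in every case $|N_k(n+1)-N_k(n)|\le 1$. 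Using the attachment law $\frac{1}{n-1}\big(1-\frac{d_i(n)}{2n-1}\big)$, the conditional increment is
\[
\E[N_k(n+1)-N_k(n)\mid \mathscr{F}_n]=\frac{N_{k-1}(n)}{n-1}\Big(1-\tfrac{k-1}{2n-1}\Big)-\frac{N_k(n)}{n-1}\Big(1-\tfrac{k}{2n-1}\Big)+\1(k=1),
\]
with the convention $N_0\equiv 0$. Matching the leading $O(1)$ terms forces $p_1=1-p_1$ and $p_k=p_{k-1}-p_k$ for $k\ge 2$, i.e. $p_k=2^{-k}$, the claimed limit.

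Next I would set $U_k(n)=N_k(n)-np_k$. The centred increments $N_k(j+1)-N_k(j)-\E[N_k(j+1)-N_k(j)\mid\mathscr{F}_j]$ sum to a martingale $M_k$ whose increments $\Delta M_k(j):=M_k(j+1)-M_k(j)$ are bounded by $2$. Substituting $N_j=jp_j+U_j$ into the conditional increment above and subtracting the target $p_k$ turns the recursion into the closed linear form
\[
U_k(n+1)=\Big(1-\tfrac{\alpha_{k,n}}{n}\Big)U_k(n)+\tfrac{\beta_{k,n}}{n}U_{k-1}(n)+\tfrac{b_{k,n}}{n}+\Delta M_k(n),
\]
where $\alpha_{k,n},\beta_{k,n}\to 1$ uniformly for $k=o(n)$ and the deterministic forcing obeys $|b_{k,n}|\le Ck2^{-k}$. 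The homogeneous solution is the telescoping product $\prod_{i=j+1}^{n-1}(1-\alpha_{k,i}/i)\approx j/n$, so discrete variation of parameters yields $U_k(n)\approx \frac1n\sum_{j}j\big(\frac{\beta_{k,j}}{j}U_{k-1}(j)+\frac{b_{k,j}}{j}+\Delta M_k(j)\big)$.

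Then I would estimate the three resulting pieces. The forcing piece is $\frac1n\sum_j b_{k,j}=O(k2^{-k})=O(1)$. The martingale piece is $\frac1n\widetilde M_k(n)$ with $\widetilde M_k(n)=\sum_j j\,\Delta M_k(j)$ a martingale whose increments are $\le 2j\le 2n$; Azuma--Hoeffding in its maximal form, using $\sum_{j\le n}j^2\asymp n^3$, gives $\sup_{j\le n}|\widetilde M_k(j)|\le R\sqrt{n^3\log n}$ off an event of probability $O(n^{-3R^2/8})$, hence a contribution of size $O(\sqrt{n\log n})$ to $U_k(n)$. The coupling piece is the engine of the induction: if the inductive hypothesis gives $\sup_{j\le n}|U_{k-1}(j)|\le R_{k-1}\sqrt{j\log n}$, then $\frac1n\sum_j|U_{k-1}(j)|\le \frac{R_{k-1}\sqrt{\log n}}{n}\sum_j\sqrt j\to \frac23R_{k-1}\sqrt{n\log n}$, so the contraction factor $\frac23<1$ closes the induction with $R_k\le \frac23R_{k-1}+R+o(1)$, that is $R_k\le 3R$ uniformly in $k$. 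This uniformity is precisely what stops the bound from degrading across the $O(\log n)$ relevant degrees.

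Finally I would assemble the statement. For $k\le K:=C_0\log n$, a union bound over $k$ of the estimate above yields $|P_k(n)-p_k|=|U_k(n)|/n\le C_1\big(\sqrt{\log n/n}+\log n/n\big)$ with probability $1-o(1)$. For $k>K$ one has $p_k=2^{-k}\le n^{-C_0\log 2}$, while $\sum_{k>K}N_k(n)=\#\{i:d_i(n)>K\}$, so it suffices to know the maximum degree is at most $K$ with high probability; this follows from a union bound over the $n$ vertices of an exponential tail bound for a single $d_i(n)$, of the same type underlying Theorems~\ref{ModelA_rate}--\ref{ModelA_central}, which keeps every such $k$ below the threshold. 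The hard part will be the closed-form recursion together with its contraction estimate: one must check that $\alpha_{k,n},\beta_{k,n}$ stay close to $1$ uniformly over the whole range $k\le K$ and all $j\le n$, so that the homogeneous decay is genuinely $\asymp (j/n)^1$ and the geometric factor $\tfrac23$ is legitimate, and one must manage the $n$-dependence of the variation-of-parameters weights while keeping $\widetilde M_k$ a bona fide martingale — this bookkeeping is where the clean $\sqrt{\log n/n}$ rate is won or lost.
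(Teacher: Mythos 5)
Your argument is essentially correct but follows a genuinely different route from the paper. The paper splits the problem into two independent pieces: a concentration step, where for each fixed $k$ the Doob martingale $M_l = E[N_k(n)\mid\mathscr{F}_l]$ has increments bounded by $2$ (each new vertex changes at most two degrees), so Azuma--Hoeffding gives $|N_k(n)-E[N_k(n)]|\le C\sqrt{n\log n}$ up to probability $o(n^{-1})$, uniformly in $k\le n$ and hence after a union bound over all $k$; and a purely deterministic bias step, where the recursion for $E[N_k(n)]$ is compared to $2p_k=p_{k-1}+\bone(k=1)$ and an induction \emph{on $n$} (holding for all $k$ simultaneously, since the two coefficients in the recursion sum to $1+\tfrac{1}{(n-1)(2n-1)}$) yields $|E[N_k(n)]-np_k|\le C_1\log n$. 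You instead keep the recursion pathwise for $U_k(n)=N_k(n)-np_k$, solve it by discrete variation of parameters, and induct \emph{on $k$}, closing the induction via the contraction factor $\tfrac{1}{n}\sum_{j\le n}\sqrt{j}\sim\tfrac23\sqrt{n}$; your weighted martingale $\widetilde M_k$ with increments $O(j)$ delivers the same $\sqrt{n\log n}$ fluctuation. Both give the stated rate. What the paper's decomposition buys is that neither step sees any interaction between different values of $k$, so no uniformity-in-$k$ bookkeeping and no contraction constant are needed; what your route buys is a single self-contained stochastic recursion and a mechanism (the geometric damping in $k$) that would generalize to settings where the coefficient in front of $U_{k-1}$ is not exactly the one making the deterministic induction on $n$ telescope.

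The one genuine gap is your treatment of $k>K=C_0\log n$. Your induction on $k$ only covers $k\le K$, and for larger $k$ you need $N_k(n)=0$ with high probability, i.e.\ a maximum-degree bound. This does not follow from Theorems~\ref{ModelA_rate}--\ref{ModelA_central}, which are a weak law and a CLT, not tail bounds; you would need to prove separately that $P(d_i(n)>C_0\log n)=O(n^{-2})$, e.g.\ by stochastically dominating $d_i(n)$ by $1$ plus a sum of independent Bernoulli$\bigl(\tfrac{1}{j-1}\bigr)$ variables (the conditional attachment probability is at most $\tfrac{1}{j-1}$) and applying a Chernoff bound, then taking a union over the $n$ vertices. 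This is routine but must be supplied; the crude observation that at most $2n/K$ vertices can have degree exceeding $K$ is not sufficient, since $2n/K\gg\sqrt{n\log n}$. The paper avoids this issue entirely because both of its propositions are proved uniformly over all $k\le n$ at once.
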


We note that the limit $\{p_k\}_{k=1}^{\infty}$ decays exponentially. 
It is interesting to note that R\'{e}nyi had identified the same limit for the empirical degree distribution of a random graph process where new vertices are added sequentially and attached uniformly to one of the existing vertices. We interpret this as follows: asymptotically, the degrees $d_i(n)$ are negligible in comparison to $2(n-1)$ and therefore, asymptotically, every vertex is attached ``approximately" uniformly to one of the existing vertices. In this sense, the linear de-preferential attachment model represents a weak form of de-preferential attachment. We will see that the inverse de-preferential attachment model represents a stronger form of de-preferential attachment. 

Finally, we have the asymptotic size biased distribution.
\begin{theorem}
\label{ModelA_sizebiased}
Let $\left(G_n\right)_{n=1}^{\infty}$ be a sequence of 
random graphs following a 
\emph{linear de-preferential attachment} model with
$m=1$. Let $v < n$ be a fixed vertex and $k \geq 1$.
Then as $n \rightarrow \infty$,
\begin{equation}
P\left(e_{n+1}=\{v_{n+1},v\}, d_v(n)=k\right) \to \frac{1}{2^k}.
\end{equation}
\end{theorem}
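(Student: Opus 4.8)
The plan is to compute the probability directly by conditioning on the filtration $\mathscr{F}_n$ and then passing to the limit. We want to evaluate $P\left(e_{n+1}=\{v_{n+1},v\}, d_v(n)=k\right)$ for a fixed vertex $v < n$ and fixed $k \geq 1$. The first step is to decompose on the event $\{d_v(n)=k\}$. By the tower property,
\begin{align*}
P\left(e_{n+1}=\{v_{n+1},v\}, d_v(n)=k\right)
&= \E\left[\bone\left(d_v(n)=k\right) P\left(e_{n+1}=\{v_{n+1},v\} \mid \mathscr{F}_n\right)\right].
\end{align*}
Using the attachment rule for the linear de-preferential model with $m=1$, the conditional probability equals $\frac{1}{n-1}\left(1-\frac{d_v(n)}{2n-1}\right)$, which on the event $\{d_v(n)=k\}$ is exactly $\frac{1}{n-1}\left(1-\frac{k}{2n-1}\right)$. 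Hence
\begin{align*}
P\left(e_{n+1}=\{v_{n+1},v\}, d_v(n)=k\right)
&= \frac{1}{n-1}\left(1-\frac{k}{2n-1}\right) P\left(d_v(n)=k\right).
\end{align*}

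The crux is therefore to control $P\left(d_v(n)=k\right)$ for a \emph{fixed} vertex $v$. I would argue that, for each fixed $k\geq 1$, one has the asymptotic equivalence $P\left(d_v(n)=k\right) \sim \frac{n}{2^k}\,\cdot\,\frac{1}{n}$ in an averaged sense; more precisely, the natural route is to relate the single-vertex probability to the empirical degree proportion via exchangeability or a direct recursion. The cleanest approach is to establish a recursion for $q_k(n) := P\left(d_v(n)=k\right)$. A vertex of degree $k$ at time $n$ either had degree $k$ at time $n-1$ and was not selected, or had degree $k-1$ and was selected; writing the selection probabilities from the attachment rule and taking expectations gives
\begin{align*}
q_k(n) &= q_k(n-1)\left(1-\frac{1}{n-2}\E\!\left[1-\tfrac{k}{2(n-1)-1}\,\middle|\,d_v(n-1)=k\right]\right) + q_{k-1}(n-1)\,\frac{1}{n-2}\left(1-\tfrac{k-1}{2(n-1)-1}\right),
\end{align*}
which, since the $\frac{k}{2n}$ corrections are negligible, reduces asymptotically to the uniform-attachment recursion whose stationary solution is the Geometric$\left(\tfrac12\right)$ law $p_k = 2^{-k}$. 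Solving this recursion and showing $q_k(n)\to p_k$ as $n\to\infty$ for each fixed $k$ is the main analytic content.

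Given $q_k(n) = P\left(d_v(n)=k\right) \to \frac{1}{2^k}$, I would combine this with the displayed identity. Since $\frac{1}{n-1}\left(1-\frac{k}{2n-1}\right) \to 0$ as $n\to\infty$, a naive product would vanish; the resolution is that the statement as written must be interpreted with $v$ scaling with $n$ — that is, the quantity of genuine interest is the \emph{rate} at which the new edge lands on a degree-$k$ vertex, and the factor $\frac{1}{n-1}$ is absorbed by summing over the $\Theta(n)$ candidate vertices. To match the stated limit $\frac{1}{2^k}$ exactly, I would instead read the left-hand side as the conditional attachment probability given the degree class, namely $\lim_n P\left(e_{n+1}=\{v_{n+1},v\}\,\middle|\,d_v(n)=k\right)\cdot(n-1) \to 1-0 = 1$ paired with $q_k(n)\to 2^{-k}$; alternatively the intended reading is $\lim_n (n-1)\,P\left(e_{n+1}=\{v_{n+1},v\}, d_v(n)=k\right) = \frac{1}{2^k}$, which follows immediately from the two ingredients above since $1-\frac{k}{2n-1}\to 1$.

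The hard part will be establishing the limit $q_k(n)\to 2^{-k}$ for a fixed vertex rigorously, rather than merely for the empirical average. I expect Theorem~\ref{ModelA_degree} to do most of this work: it controls $P_k(n)$ uniformly, and one can pass from the empirical proportion to the single-vertex probability $q_k(n)$ by noting $\E\left[P_k(n)\right] = \frac{1}{n}\sum_{i=1}^n P\left(d_i(n)=k\right)$, then using a symmetrization or a monotonicity argument to control the $v$-dependence. The delicate point is that the vertices are \emph{not} exchangeable (earlier vertices tend to have higher degrees), so $q_k(n)$ genuinely depends on when $v$ was introduced; reconciling this with the clean limit $2^{-k}$ requires showing that for $v$ fixed while $n\to\infty$, the vertex $v$ ``forgets'' its early advantage because the per-step attachment probability $\frac{1}{n-1}$ decays, driving the degree increments to a Poissonization whose stationary law is Geometric$\left(\tfrac12\right)$.
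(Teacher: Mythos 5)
Your opening computation is right, and you correctly noticed that for a literally fixed vertex $v$ the quantity $\frac{1}{n-1}\left(1-\frac{k}{2n-1}\right)P\left(d_v(n)=k\right)$ tends to zero; you even gestured, in passing, at the correct resolution (the $\frac{1}{n-1}$ is absorbed by summing over the $\Theta(n)$ candidate vertices). But you then abandoned that route for a claim that is false: $q_k(n)=P\left(d_v(n)=k\right)\to 2^{-k}$ for a fixed vertex $v$ contradicts Theorem~\ref{ModelA_rate} of this very paper, which gives $d_v(n)/\log n \to 1$ in probability, hence $P\left(d_v(n)=k\right)\to 0$ for every fixed $k$. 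The error in your recursion heuristic is that the source term $\bone_{(k=1)}$ in the empirical-distribution recursion $2p_k = p_{k-1}+\bone_{(k=1)}$ comes from newly added vertices entering with degree $1$; for a single tagged vertex there is no such injection, and since $\sum_n \frac{1}{n}=\infty$ the tagged vertex's degree increases infinitely often, so all mass escapes to infinity --- there is no ``Poissonization with stationary law Geometric$\left(\frac{1}{2}\right)$''. Consequently your proposed rescaled reading $(n-1)\,P\left(e_{n+1}=\{v_{n+1},v\},\, d_v(n)=k\right)\to 2^{-k}$ fails as well, because $(n-1)P(\cdot)=\left(1-\frac{k}{2n-1}\right)q_k(n)\to 0$.

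The statement is the asymptotic \emph{size-biased} degree distribution: the event to compute is that the new edge attaches to \emph{some} vertex of current degree $k$, i.e., one sums over all candidate vertices. This is exactly how the paper argues: conditionally on $\mathscr{F}_n$ the probability equals $N_k(n)\,\frac{1}{n-1}\left(1-\frac{k}{2n-1}\right)$, which by Theorem~\ref{ModelA_degree} converges in probability to $p_k=2^{-k}$; since $N_k(n)\le n$, this random variable is bounded by $2$, hence uniformly integrable, and taking expectations finishes the proof. So the missing idea in your write-up is the factor $N_k(n)$: once it is in place, the single-vertex probability $q_k(n)$ (whose asymptotics you tried to extract via a recursion) plays no role at all, and the theorem reduces to a one-line appeal to Theorem~\ref{ModelA_degree} plus uniform integrability.
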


\subsubsection{$m > 1$ Case}
We now state the generalizations of the above results for $m > 1$.

\begin{theorem}
\label{modelA_m}
Let $\left(G_n\right)_{n=1}^{\infty}$ be a sequence of 
random graphs following a 
\emph{linear de-preferential attachment} model with
$m > 1$ and $d_i(n)$ be the degree of a fixed vertex 
$i \geq 1$. Then as $n \rightarrow \infty$,
\begin{equation}
\frac{d_i(n)}{m \log n} {\buildrel P \over \longrightarrow} 1.
\end{equation}
\end{theorem}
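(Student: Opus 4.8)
The plan is to carry over the martingale (Doob-decomposition) argument behind the $m=1$ results, paying attention to the extra bookkeeping created by the $m$ half-edges that the new vertex brings at each step. Fix the vertex $v_i$ and set $\Delta_n := d_i(n+1)-d_i(n)$, the change in its degree when $v_{n+1}$ is inserted. Since the $m$ half-edges are attached one at a time, $\Delta_n=\sum_{k=0}^{m-1}\mathbf 1\!\left(e_{n+1,k+1}=\{v_i,v_{n+1}\}\right)$ takes values in $\{0,1,\dots,m\}$. Taking conditional expectations of the attachment probabilities and using that $d_i(n+1,k)=d_i(n)+\#\{\text{of the first }k\text{ half-edges that landed on }v_i\}$, I get
\[
a_n:=\E[\Delta_n\mid\mathscr F_n]=\sum_{k=0}^{m-1}\frac{1}{n-1}\left(1-\frac{\E[d_i(n+1,k)\mid\mathscr F_n]}{k+(2n-1)m}\right)=\frac{m}{n-1}-\varepsilon_n,
\]
where $\varepsilon_n\ge 0$ is $\mathscr F_n$-measurable and gathers the subtracted corrections. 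Because each parenthesis is at most $1$, we have $a_n\le \frac{m}{n-1}$ deterministically, which already yields the a priori bound $\E[d_i(n)]\le\sum_j\frac{m}{j-1}+O(1)=m\log n+O(1)$.

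Next I would show that the corrections are summable, so that they perturb neither the leading order nor the fluctuations. Since $d_i(n+1,k)\le d_i(n)+m$ and $k+(2n-1)m\ge cn$ for some $c>0$ (as $m\ge 2$), the a priori bound gives $\E[\varepsilon_n]=O\left(\frac{\log n}{n^2}\right)$, hence $\sum_n\E[\varepsilon_n]<\infty$ and $\sum_n\varepsilon_n=O_P(1)$ by Markov's inequality. Writing the Doob decomposition
\[
d_i(n)=d_i(n_0)+A_n+M_n,\qquad A_n=\sum_{j=n_0}^{n-1}a_j,\quad M_n=\sum_{j=n_0}^{n-1}\bigl(\Delta_j-a_j\bigr),
\]
the compensator satisfies $A_n=\sum_{j}\frac{m}{j-1}-\sum_j\varepsilon_j=m\log n+O(1)-O_P(1)=m\log n+o_P(\log n)$.

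It then remains to control the martingale $M_n$. Since $\Delta_j\in\{0,\dots,m\}$ gives $\Delta_j^2\le m\Delta_j$, the conditional variance obeys $\Var(\Delta_j\mid\mathscr F_j)\le m\,a_j=O(1/j)$, so by orthogonality of martingale increments $\E[M_n^2]=\sum_j\E[\Var(\Delta_j\mid\mathscr F_j)]=O(\log n)$, and Chebyshev gives $M_n=O_P(\sqrt{\log n})$. Combining the three pieces and absorbing $d_i(n_0)=O(1)$ together with $A_n-m\log n=o_P(\log n)$,
\[
\frac{d_i(n)}{m\log n}=\frac{d_i(n_0)+A_n+M_n}{m\log n}=1+\frac{o_P(\log n)+O_P(\sqrt{\log n})}{m\log n},
\]
and the right-hand side tends to $1$ in probability.

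I expect the main obstacle to be the within-step bookkeeping: because the $m$ half-edges are attached sequentially with degree updates, the intermediate degrees $d_i(n+1,k)$ are themselves random and mildly correlated, so one must verify that these fluctuations (each of size at most $m$) disturb both the compensator and the quadratic variation only at the negligible orders claimed above. Once that is in hand, the estimate is exactly the logarithmic-growth martingale argument used for $m=1$, now carrying the extra multiplicative factor $m$.
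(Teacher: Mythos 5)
Your proof is correct and follows essentially the same route as the paper: both establish $E[d_i(n)] \sim m\log n$ from the one-step attachment probabilities and then control the fluctuations via an $O(\log n)$ second-moment bound plus Chebyshev. The only real difference is bookkeeping --- the paper solves the mean recursion exactly through telescoping products $\alpha_n,\beta_n,\gamma_n$ and bounds the variance by iterating the law of total variance over half-edge steps, whereas you absorb the degree-dependent corrections into a summable error in the Doob compensator and bound the martingale part by orthogonality of increments, which is a cleaner presentation of the same estimates.
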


The next result collects the CLT for the $m>1$ case. 

\begin{theorem}
\label{ModelA_m_clt}
Let $\left(G_n\right)_{n=1}^{\infty}$ be a sequence of 
random graphs following a 
\emph{linear de-preferential attachment} model with
$m>1$. Then as $n \rightarrow \infty$,
\begin{equation}
\frac{d_i(n) - m \log n}{\sqrt{m \log n}} \cd N(0,1).
\end{equation}
\end{theorem}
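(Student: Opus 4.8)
The plan is to prove the result via a martingale central limit theorem applied to a fine-grained martingale that tracks each of the $m$ sequential half-edge attachments, in direct parallel with the $m=1$ argument behind Theorem~\ref{ModelA_central}. First I would introduce indicator variables recording the attachments to the fixed vertex $v_i$: for $j \ge n_0$ and $k = 0, \dots, m-1$, set $\eta_{j,k} = \1\left(e_{j+1,k+1} = \{v_{j+1}, v_i\}\right)$, so that $d_i(n) = d_i(n_0) + \sum_{j=n_0}^{n-1}\sum_{k=0}^{m-1}\eta_{j,k}$. Conditioning on $\mathscr{F}_{j+1,k}$, each $\eta_{j,k}$ is Bernoulli with
\begin{equation*}
p_{j,k} := \E[\eta_{j,k}\mid \mathscr{F}_{j+1,k}] = \frac{1}{j-1}\left(1 - \frac{d_i(j+1,k)}{k+(2j-1)m}\right).
\end{equation*}
Ordering the pairs $(j,k)$ lexicographically, the centered differences $\eta_{j,k} - p_{j,k}$ form a bounded martingale-difference sequence. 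Writing $M_n = \sum_{j,k}(\eta_{j,k}-p_{j,k})$ and $A_n = \sum_{j,k} p_{j,k}$ yields the decomposition $d_i(n) = d_i(n_0) + M_n + A_n$.

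The second step is to show $A_n = m\log n + O_P(1)$. Splitting $p_{j,k} = \frac{1}{j-1} - \frac{1}{j-1}\cdot\frac{d_i(j+1,k)}{k+(2j-1)m}$, the leading sum is $\sum_{j=n_0}^{n-1}\frac{m}{j-1} = m\log n + O(1)$. The remaining sum is nonnegative, and since $\E[d_i(j+1)] \le \E[d_i(j)] + \frac{m}{j-1}$ iterates to $\E[d_i(j)] = O(m\log j)$ (consistent with Theorem~\ref{modelA_m}), each correction term has expectation of order $\log j / j^2$; summability over $(j,k)$ bounds the total expectation uniformly in $n$, so this correction is $O_P(1)$ by Markov's inequality. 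Hence $(A_n - m\log n)/\sqrt{m\log n} \cp 0$.

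The third step identifies the asymptotic variance and applies the limit theorem. The predictable quadratic variation is $\langle M\rangle_n = \sum_{j,k} p_{j,k}(1-p_{j,k}) = A_n - \sum_{j,k} p_{j,k}^2$, and since $p_{j,k} \le \frac{1}{j-1}$ the subtracted sum is at most $m\sum_j (j-1)^{-2} = O(1)$ deterministically; combined with the previous step this gives $\langle M\rangle_n/(m\log n) \cp 1$. Because $|\eta_{j,k} - p_{j,k}| \le 1$ while $\sqrt{m\log n} \to \infty$, the conditional Lindeberg condition holds trivially. The martingale CLT (Hall--Heyde) then yields $M_n/\sqrt{m\log n} \cd N(0,1)$, and Slutsky's theorem together with the second step gives $(d_i(n) - m\log n)/\sqrt{m\log n} \cd N(0,1)$.

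The main obstacle, relative to the $m=1$ case, is the bookkeeping introduced by the sequential attachment: the denominators now carry the intermediate index $k$ and the compensator involves the intermediate degrees $d_i(j+1,k)$ rather than $d_i(j)$. The crucial observation is that these modifications are benign, since $d_i(j+1,k)$ differs from $d_i(j)$ by at most $m$ and every correction term remains summable of order $\log j / j^2$; consequently the leading $m\log n$ term and the $O_P(1)$ fluctuation control survive intact. Verifying this uniform summability — that the intermediate degree updates do not accumulate across the $m$ half-edges — is the only genuinely new estimate required.
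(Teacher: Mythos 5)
Your proposal is correct and follows essentially the same route as the paper: linearize the doubly-indexed attachment steps, decompose $d_i(n)$ into a bounded-increment martingale plus its compensator, show the conditional variance sum is $m\log n(1+o_P(1))$ using the first-moment bound $\E[d_i(n)] = O(m\log n)$ to kill the correction terms, invoke the martingale CLT, and control the compensator's deviation from $m\log n$. The only cosmetic difference is that you phrase the compensator control as $A_n = m\log n + O_P(1)$ while the paper normalizes by $\sqrt{m\log n}$ and sends it to $0$ in probability; these are the same estimate.
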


\noindent
{\bf Remark:}
Unfortunately, we have been unable to derive an analogue of Theorem ~\ref{ModelA_degree} for $m > 1$. 
Needless to say that we expect the empirical degree distribution
to converge just like in the $m=1$ case, but have been
unsuccessful in conjecturing the form of the limiting degree distribution; this has been the main barrier in establishing such a result. 

\subsection{Inverse De-preferential}
\subsubsection{$m=1$ Case}
We have the following results for the $m=1$ case. 
Let $\lambda^* >0$ satisfy the equation 
\begin{equation}
\sum_{n=1}^{\infty} \prod_{i=1}^{n} \frac{1}{1+i\lambda^*}=1.
\end{equation}

\begin{theorem}
\label{ModelB_exact}
Let $\left(G_n\right)_{n=1}^{\infty}$ be a sequence of 
random graphs following a 
\emph{inverse de-preferential attachment} model with
$m=1$. Then as $n \rightarrow \infty$,
\begin{equation}
\frac{d_i(n)}{\sqrt{\log n}} \to \sqrt{\frac{2}{\lambda^*}}
\mbox{\ \ a.s.}
\end{equation}
\end{theorem}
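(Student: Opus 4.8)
The plan is to realise the discrete-time inverse de-preferential process as the embedded jump chain of a continuous-time Crump--Mode--Jagers (general) branching process, and then to read off the degree of a fixed vertex from (i) the Malthusian growth rate of this branching process and (ii) a strong law of large numbers for the degree of a single vertex. The embedding is set up as follows: to each vertex I attach an independent \emph{pure birth process} governing its degree, in which a vertex currently of degree $k$ waits an independent $\mathrm{Exp}(1/k)$ holding time before its degree jumps to $k+1$; equivalently, a vertex of degree $k$ attracts a new half-edge at instantaneous rate $1/k$. By the superposition/thinning property of independent exponential clocks, the total rate at which a new vertex is created is $\sum_i 1/d_i$ and, conditionally on a birth occurring, the attaching vertex is $v_i$ with probability $(1/d_i)/\sum_j(1/d_j)$. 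This is exactly the inverse de-preferential rule, so the graph recorded at the successive jump times of the embedding is distributed as $\{G_n\}$. In this picture each vertex produces one offspring at every jump of its own degree process, and its degree at age $a$ equals $1$ plus the number of offspring it has produced by age $a$.

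Second I would identify the Malthusian parameter. Let $T_j\sim\mathrm{Exp}(1/j)$ be the successive holding times of one vertex and $S_k=\sum_{j=1}^k T_j$ the time of its $k$-th birth, so the mean reproduction measure $\mu$ satisfies $\int_0^\infty e^{-\lambda t}\,\mu(dt)=\sum_{k\ge 1}\E\!\left[e^{-\lambda S_k}\right]$. Since $\E[e^{-\lambda T_j}]=\tfrac{1}{1+j\lambda}$, this equals
\[
\int_0^\infty e^{-\lambda t}\,\mu(dt)=\sum_{n=1}^{\infty}\prod_{i=1}^{n}\frac{1}{1+i\lambda},
\]
so the Malthusian equation $\int_0^\infty e^{-\lambda t}\,\mu(dt)=1$ is precisely the defining equation for $\lambda^*$. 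I would check that the right-hand side is finite, continuous and strictly decreasing in $\lambda>0$ with the correct limiting values, so that $\lambda^*$ exists, is unique and positive, and I would verify the $x\log x$-type integrability hypothesis needed below.

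Third I would invoke the convergence theorem for supercritical general branching processes (Nerman / Jagers). Because every individual reproduces infinitely often (each $S_k$ is a.s.\ finite), the process never dies, and the theorem gives $e^{-\lambda^* t}\,Z(t)\to W$ almost surely, where $Z(t)$ is the number of vertices born by continuous time $t$ and $W>0$ almost surely. Writing $\tau_n$ for the birth time of the $n$-th vertex and using $Z(\tau_n)=n$, this inverts to $\tau_n=\tfrac{1}{\lambda^*}\log n+O(1)$ almost surely. Independently, for a single vertex I would show $X(a)/\sqrt{2a}\to1$ a.s., where $X(a)$ is its degree at age $a$: indeed $\E[S_k]=\sum_{j=1}^k j\sim k^2/2$ while $\mathrm{Var}(T_j)=j^2$ gives $\sum_j \mathrm{Var}(T_j)/(\E S_j)^2<\infty$, so Kolmogorov's strong law yields $S_k/(k^2/2)\to1$ a.s., equivalently $X(a)\sim\sqrt{2a}$. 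Combining the pieces, a fixed vertex $v_i$ is born at the a.s.\ finite time $\tau_i$, so at time $\tau_n$ its age is $\tau_n-\tau_i=\tfrac{1}{\lambda^*}\log n+O(1)$ and its degree is $d_i(n)=X_i(\tau_n-\tau_i)$, whence
\[
\frac{d_i(n)}{\sqrt{\log n}}
=\frac{X_i(\tau_n-\tau_i)}{\sqrt{2(\tau_n-\tau_i)}}\cdot\frac{\sqrt{2(\tau_n-\tau_i)}}{\sqrt{\log n}}
\longrightarrow 1\cdot\sqrt{\frac{2}{\lambda^*}}\qquad\text{almost surely.}
\]

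I expect the main obstacle to be the rigorous third step: checking that the embedded process is a bona fide supercritical CMJ process meeting the integrability hypotheses of Nerman's theorem (so that $W>0$ a.s.), and upgrading the growth statement $Z(t)\sim We^{\lambda^* t}$ into a statement about the birth time $\tau_n$ of a fixed labelled vertex that is sharp enough to be usable. Because the answer lives on the $\sqrt{\log n}$ scale, the fluctuations of $\tau_n$ around $\tfrac1{\lambda^*}\log n$ (in particular the additive $O(1)$ coming from $\log W$) must be controlled well enough to guarantee that they enter only through the negligible $1+O(1/\log n)$ correction obtained after taking the square root above, rather than perturbing the constant $\sqrt{2/\lambda^*}$.
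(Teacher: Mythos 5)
Your proposal is correct and follows essentially the same route as the paper: the same pure-birth/CMJ embedding, the same strong-law argument giving $X(a)\sim\sqrt{2a}$ for a single vertex's degree process, and the same Malthusian equation identifying $\lambda^*$. The only (immaterial) difference is how $\tau_n\sim\frac{1}{\lambda^*}\log n$ is obtained: you invert the a.s.\ growth theorem $e^{-\lambda^* t}\lvert\Upsilon(t)\rvert\to W>0$, whereas the paper applies Nerman's ratio theorem to the functional $\phi(G)=1/(1+\#\mathrm{children}(\emptyset))$ to get $\tilde D_n/n\to\lambda^*$ and then sums the conditional means of the inter-arrival times $\tau_{k+1}-\tau_k$ via a martingale argument; both yield $\tau_n/\log n\to 1/\lambda^*$ a.s., which is all that the $\sqrt{\log n}$ scale requires.
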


Our next results identify the limit of the empirical degree distribution and its properties.

\begin{theorem}
\label{ModelB_degree}
Let $\left(G_n\right)_{n=1}^{\infty}$ be a sequence of 
random graphs following a 
\emph{inverse de-preferential attachment} model with
$m=1$. 
Let 
\begin{equation}
P_k(n) := \frac{1}{n} \sum_{i=1}^n \bone\left(d_i(n) = k\right),
\end{equation}
be the fraction of vertices with degree $k$. 
Then 
$\forall k \geq 1$, as $n \to \infty$, 
\begin{equation}
P_k(n) \to \frac{k\lambda^*}{ k\lambda^* + 1} \displaystyle\prod_{i=1}^{k-1} \frac{1}{i\lambda^* + 1}
\mbox{\ \ \ \ a.s.}
\end{equation}
\end{theorem}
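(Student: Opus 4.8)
The plan is to prove this through the continuous-time embedding foreshadowed in the introduction, reducing the statement to a limit theorem for general (Crump--Mode--Jagers) branching processes counted by a characteristic. First I would set up the embedding: to each vertex attach an independent pure birth process in which, while the vertex has degree $k$, it acquires its next incident edge at rate $w(k)=1/k$; each such birth simultaneously creates a new vertex (born at degree $1$) and raises the firing vertex's degree by one. Since the minimum of independent exponential clocks with rates $1/d_i(n)$ selects $v_i$ with probability $\frac{1/d_i(n)}{\sum_j 1/d_j(n)} = C_n/d_i(n)$, the jump chain of this continuous-time process has exactly the law of the inverse de-preferential process $\{G_n\}$. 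Consequently, after the $n$-th birth the embedded tree is distributed as $G_n$, and $P_k(n)$ is the fraction of individuals of degree $k$ in the branching population sampled at its $n$-th birth time. I would note that the time from degree $i$ to $i+1$ is $\mathrm{Exp}(1/i)$, so writing $T_k=\sum_{i=1}^k E_i$ with $E_i\sim\mathrm{Exp}(1/i)$ independent, the process does not explode ($\mathbb{E}[T_k]=\sum_{i=1}^k i\to\infty$) and the embedding is well defined.

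Next I would identify the Malthusian parameter. The Laplace transform of the mean reproduction measure is
\[
\hat\rho(\lambda) = \sum_{k=1}^\infty \mathbb{E}\!\left[e^{-\lambda T_k}\right] = \sum_{k=1}^\infty \prod_{i=1}^k \frac{1/i}{1/i+\lambda} = \sum_{k=1}^\infty \prod_{i=1}^k \frac{1}{1+i\lambda}.
\]
As $\hat\rho$ is continuous and strictly decreasing on $(0,\infty)$ with $\hat\rho(0^+)=\infty$ and $\hat\rho(\infty)=0$, there is a unique $\lambda^*>0$ with $\hat\rho(\lambda^*)=1$, precisely the $\lambda^*$ of the statement; moreover $\hat\rho(\lambda)<\infty$ for all $\lambda>0$, which supplies the integrability needed below.

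With the embedding in place I would invoke the Rudas--T\'oth--Valk\'o / Nerman almost-sure convergence theorem with the counting characteristic $\phi_k(u)=\mathbf{1}(\deg(u)=k)$ (and $\phi\equiv 1$ for the total count). Since individuals never die, the denominator is $1/\lambda^*$, and $\deg(u)=k \iff T_{k-1}\le u<T_k$, so this yields, almost surely,
\[
P_k(n) \longrightarrow \frac{\mathbb{E}\!\left[\int_0^\infty e^{-\lambda^* u}\,\mathbf{1}(\deg(u)=k)\,du\right]}{\mathbb{E}\!\left[\int_0^\infty e^{-\lambda^* u}\,du\right]} = \lambda^*\int_0^\infty e^{-\lambda^* u}\,\mathbb{P}(T_{k-1}\le u<T_k)\,du.
\]
To evaluate the integral I would read $\lambda^* e^{-\lambda^* u}\,du$ as the law of an independent clock $W\sim\mathrm{Exp}(\lambda^*)$, so that $p_k=\mathbb{P}(T_{k-1}\le W<T_k)$, and interpret this as a race: at degree $i$ an advance clock of rate $1/i$ competes with the killing clock $W$ of rate $\lambda^*$, and by the memoryless property killing occurs at degree $k$ exactly when the advance clock wins at degrees $1,\dots,k-1$ and loses at degree $k$. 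This gives
\[
p_k = \left(\prod_{i=1}^{k-1}\frac{1/i}{1/i+\lambda^*}\right)\frac{\lambda^*}{1/k+\lambda^*} = \frac{k\lambda^*}{k\lambda^*+1}\prod_{i=1}^{k-1}\frac{1}{i\lambda^*+1},
\]
the claimed limit. The passage from continuous to discrete time is immediate because the population grows by one at each birth, so sampling at the $n$-th birth coincides with $G_n$ and the a.s.\ convergence transfers.

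I expect the main obstacle to be verifying the hypotheses of the branching-process limit theorem for this particular reproduction law, rather than the algebra. One must confirm supercriticality together with the Malthusian and $x\log x$-type regularity conditions, and the admissibility of the characteristics $\phi_k$, for a point process whose birth rates $1/i$ decay to $0$ and whose inter-birth times therefore have growing means. The finiteness of $\hat\rho$ on all of $(0,\infty)$ and the boundedness $\phi_k\le 1$ should settle the characteristic side, but care is needed to select a version of the theorem that genuinely applies to an age-dependent reproduction process of this decaying-rate form.
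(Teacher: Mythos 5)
Your proposal is correct and follows essentially the same route as the paper: embed the graph process into a CMJ branching process with birth rates $1/i$, identify $\lambda^*$ as the root of $\hat\rho(\lambda)=1$, and apply the Rudas--T\'oth--Valk\'o/Nerman theorem with the characteristic $\mathbf{1}(\deg=k)$. The only difference is cosmetic: you evaluate $\lambda^*\int_0^\infty e^{-\lambda^* u}\,\P(T_{k-1}\le u<T_k)\,du$ via an exponential-race argument, whereas the paper uses Fubini to reduce it to $\E[e^{-\lambda^* T_{k-1}}]-\E[e^{-\lambda^* T_k}]$; both yield the same product formula.
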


The proof of this result will critically exploit the embedding of this discrete time random graph process into a continuous time age dependent branching process where each vertex reproduces according to i.i.d copies of a particular pure birth process. $\lambda^*$ is actually the Malthusian Parameter of this pure birth process. 

\begin{corollary}
\label{ModelB_character}
The limiting empirical degree distribution of  
a sequence of 
random graphs following the 
\emph{inverse de-preferential attachment} model with
$m=1$
has mean $2$, mode $1$ and its tail decays at a rate faster than exponential. 
\end{corollary}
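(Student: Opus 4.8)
The plan is to work entirely from the explicit limiting probabilities
\[
p_k = \frac{k\lambda^*}{k\lambda^*+1}\prod_{i=1}^{k-1}\frac{1}{i\lambda^*+1}, \qquad k\ge 1,
\]
supplied by Theorem~\ref{ModelB_degree}, together with the defining relation $\sum_{n\ge1}\prod_{i=1}^n \frac{1}{1+i\lambda^*}=1$ for $\lambda^*$. The single observation driving everything is a telescoping rewriting. Setting $a_k := \prod_{i=1}^k \frac{1}{1+i\lambda^*}$ with the convention $a_0=1$, the identity $\frac{k\lambda^*}{k\lambda^*+1}=1-\frac{1}{k\lambda^*+1}$ together with $a_{k-1}=(k\lambda^*+1)a_k$ gives the two equivalent forms
\[
p_k = a_{k-1}-a_k = k\lambda^*\,a_k .
\]
The telescoping form immediately yields $\sum_{k\ge1}p_k = a_0 - \lim_k a_k = 1$ (using $a_k\downarrow 0$), reconfirming that $\{p_k\}$ is a genuine probability mass function.

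For the \emph{mean} I would apply summation by parts to $\sum_k k\,p_k = \sum_k k\,(a_{k-1}-a_k)$. The partial sums collapse to $\sum_{j=0}^{N-1}a_j - N a_N$; letting $N\to\infty$ and invoking the defining equation $\sum_{j\ge1}a_j = 1$ (so that $\sum_{j\ge0}a_j = 1+1 = 2$) together with $N a_N\to 0$ gives mean $=2$. The decay $Na_N\to0$ is immediate from the tail bound below.

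For the \emph{mode} I would examine consecutive ratios. From $p_k = k\lambda^*a_k$,
\[
\frac{p_{k+1}}{p_k} = \frac{k+1}{k\,(1+(k+1)\lambda^*)},
\]
which is $<1$ precisely when $k(k+1)\lambda^* > 1$. For $k\ge 2$ this holds for any $\lambda^*$ bounded below by a positive constant, so the only borderline case is $k=1$, where the condition reads $\lambda^* > \tfrac12$. The crux is therefore to establish $\lambda^* > \tfrac12$. Since $\lambda\mapsto \sum_{n\ge1}\prod_{i=1}^n \frac{1}{1+i\lambda}$ is strictly decreasing on $(0,\infty)$ and equals $1$ at $\lambda^*$, it suffices to verify that at $\lambda=\tfrac12$ the sum exceeds $1$; a direct evaluation gives $\sum_{n\ge1}\frac{2^{n+1}}{(n+2)!} = \tfrac12(e^2-5)\approx 1.19 > 1$, forcing $\lambda^*>\tfrac12$. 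Hence $p_{k+1}<p_k$ for every $k\ge1$, the sequence is strictly decreasing, and the mode equals $1$.

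Finally, for the \emph{tail} I would bound $a_k = \prod_{i=1}^k\frac{1}{1+i\lambda^*}\le \prod_{i=1}^k\frac{1}{i\lambda^*}=\frac{1}{\lambda^{*k}\,k!}$, whence $p_k = k\lambda^*a_k \le \frac{1}{\lambda^{*(k-1)}(k-1)!}$. By Stirling $(k-1)!$ grows super-exponentially, so $p_k$ decays faster than any geometric sequence; equivalently $p_{k+1}/p_k\to 0$ gives $p_k^{1/k}\to 0$, which is precisely faster-than-exponential decay. I expect the only step requiring a genuine estimate, rather than bookkeeping with the telescoping and product representations, to be the inequality $\lambda^*>\tfrac12$ used for the mode; the mean and tail then follow directly.
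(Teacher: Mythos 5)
Your proof is correct, and for the mean and the tail it is essentially the paper's own argument in different notation: your telescoping identity $p_k=a_{k-1}-a_k$ is exactly the paper's tail-sum formula $\sum_{k\ge n}p_k=\prod_{i=1}^{n-1}\frac{1}{1+i\lambda^*}$, the mean $=2$ comes in both cases from summing tail probabilities and invoking the defining equation for $\lambda^*$, and the super-exponential tail comes in both cases from a factorial in the denominator plus Stirling (the paper phrases it via $\Gamma\left(n+\frac{1}{\lambda^*}\right)$, you via $k!$). The one place you genuinely improve on the paper is the mode. The paper disposes of it by asserting ``$\lambda^*>1$'' and then noting $\tilde p_{k+1}/\tilde p_k\le 1$; but the assertion $\lambda^*>1$ is not proved there, and in fact it is false: evaluating the defining sum at $\lambda=1$ gives $\sum_{n\ge1}\prod_{i=1}^n\frac{1}{1+i}=\sum_{n\ge1}\frac{1}{(n+1)!}=e-2<1$, so monotonicity forces $\lambda^*<1$. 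What the ratio test actually requires at the only borderline index $k=1$ is $\lambda^*>\frac12$, which is precisely what you isolate and verify by computing the sum at $\lambda=\frac12$ to be $\frac{1}{2}(e^2-5)\approx1.19>1$. So your argument supplies the missing (and correct) quantitative input that the paper's proof of the mode silently needs; the conclusion of the corollary is unaffected, but your version is the one that actually closes the gap.
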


Finally, in this case, we also have the asymptotic size biased distribution for the de-preferential attachment model. 

\begin{theorem}
\label{ModelB_sizebiased}
Let $\left(G_n\right)_{n=1}^{\infty}$ be a sequence of 
random graphs following the 
\emph{inverse de-preferential attachment} model with
$m=1$. Let $v < n$ be a fixed vertex and $k \geq 1$.
Then as $n \rightarrow \infty$,
\begin{equation}
P\left(e_{n+1}=\{v_{n+1},v\}, d_v(n)=k\right) \to 
\displaystyle\prod_{i=1}^{k} \frac{1}{1+i\lambda^*}.
\end{equation}
\end{theorem}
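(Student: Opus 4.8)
Since $m=1$, the half-edge $e_{n+1}$ has a single endpoint, so the event in the statement records the degree of that endpoint; this is the size-biased degree law referred to in the theorem. (For a genuinely fixed index the probability would be $o(1)$, since $d_v(n)\to\infty$ by Theorem~\ref{ModelB_exact}, so the content of the statement is the law of the endpoint's degree.) The plan is to evaluate this probability by conditioning on $\mathscr{F}_n$ and then to identify the limit with the inverse-weighted empirical degree distribution of Theorem~\ref{ModelB_degree}. Conditioning on $\mathscr{F}_n$, the inverse de-preferential rule selects every vertex of current degree $k$ with the common weight $C_n/k$, so the conditional probability that the endpoint has degree $k$ is the total weight carried by the degree-$k$ vertices,
\begin{equation*}
P\!\left(e_{n+1}=\{v_{n+1},v\},\,d_v(n)=k\,\middle|\,\mathscr{F}_n\right)=C_n\sum_{i:\,d_i(n)=k}\frac{1}{k}=\frac{C_n\,N_k(n)}{k},
\end{equation*}
and hence, using $C_n=1/D_n$ with $D_n=\sum_{i=1}^n d_i(n)^{-1}$,
\begin{equation*}
P\!\left(e_{n+1}=\{v_{n+1},v\},\,d_v(n)=k\right)=\frac{1}{k}\,\E\!\left[\frac{N_k(n)}{D_n}\right].
\end{equation*}

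Next I would establish the almost-sure limit of the integrand. Writing $N_k(n)/D_n=P_k(n)\big/(D_n/n)$, Theorem~\ref{ModelB_degree} gives $P_k(n)\to p_k$ a.s., where $p_k=k\lambda^*\prod_{i=1}^{k}(1+i\lambda^*)^{-1}$. For the denominator I would use $D_n/n=\sum_{j\ge1}P_j(n)/j$ together with the uniform tail bound $\sum_{j>K}P_j(n)/j\le (K+1)^{-1}\sum_{j>K}P_j(n)\le (K+1)^{-1}$, valid for every $n$. Truncating at level $K$, applying Theorem~\ref{ModelB_degree} to the finite head, and letting $K\to\infty$ then yields $D_n/n\to\sum_{j\ge1}p_j/j$ a.s. Using the explicit form of $p_j$, each term equals $\lambda^*\prod_{i=1}^{j}(1+i\lambda^*)^{-1}$, so
\begin{equation*}
\sum_{j\ge1}\frac{p_j}{j}=\lambda^*\sum_{j\ge1}\prod_{i=1}^{j}\frac{1}{1+i\lambda^*}=\lambda^*,
\end{equation*}
by the very equation defining $\lambda^*$. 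Consequently $N_k(n)/D_n\to p_k/\lambda^*$ a.s.

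To pass from the almost-sure limit to the limit of the expectation I would invoke bounded convergence: since $D_n=\sum_j N_j(n)/j\ge N_k(n)/k$, the integrand satisfies $0\le N_k(n)/D_n\le k$ deterministically. Therefore
\begin{equation*}
\frac{1}{k}\,\E\!\left[\frac{N_k(n)}{D_n}\right]\longrightarrow\frac{1}{k}\cdot\frac{p_k}{\lambda^*}=\frac{p_k}{k\lambda^*}=\prod_{i=1}^{k}\frac{1}{1+i\lambda^*},
\end{equation*}
which is the asserted limit.

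The main obstacle is the control of $D_n$, equivalently of the inverse-weighted sum $\sum_j P_j(n)/j$: the weights $1/j$ concentrate on the low-degree vertices, so one must ensure that the pointwise convergence $P_j(n)\to p_j$ of Theorem~\ref{ModelB_degree} is not spoiled by the joint fluctuations of the counts $N_j(n)$ across $j$. The uniform tail bound $\sum_{j>K}P_j(n)/j\le (K+1)^{-1}$ is precisely what legitimizes the truncation and is the crux of the argument. Finally, the identity $\sum_j p_j/j=\lambda^*$ is the Malthusian equation for the embedded pure-birth process, which also supplies a conceptual check: the endpoint of $e_{n+1}$ is the \emph{parent} at a birth epoch of the age-dependent branching process underlying Theorem~\ref{ModelB_degree}, selected at rate proportional to the reciprocal of its degree, and its stable degree law is exactly the size-biased distribution $p_k/(k\lambda^*)$.
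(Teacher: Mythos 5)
Your proof is correct and follows the same architecture as the paper's: condition on $\mathscr{F}_n$ to get the conditional probability $C_n N_k(n)/k$ (both you and the paper read the statement as the law of the endpoint's degree, summing over all degree-$k$ vertices), pass to the almost-sure limit, then use boundedness and dominated convergence to take expectations. The one place you diverge is in how you obtain $D_n/n \to \lambda^*$. The paper simply cites its Lemma~\ref{ModelB_bound}, which is a direct application of Nerman's Theorem A to the CMJ embedding with the bounded functional $\phi(G) = \frac{1}{1+\#\mathrm{children}(\emptyset)}$; you instead re-derive this limit from Theorem~\ref{ModelB_degree} alone, via the decomposition $D_n/n = \sum_{j\ge 1} P_j(n)/j$, the uniform tail bound $\sum_{j>K} P_j(n)/j \le (K+1)^{-1}$ (valid since $\sum_j P_j(n)=1$), and the identity $\sum_j p_j/j = \lambda^* \sum_j \prod_{i=1}^j (1+i\lambda^*)^{-1} = \lambda^*$, which is exactly the Malthusian equation. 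This truncation argument is valid and is a genuine, if small, alternative: it makes the proof self-contained given the degree-distribution theorem, avoids a second invocation of the branching-process machinery, and exposes why the limit constant is the Malthusian parameter; the paper's route is shorter because the needed limit was already packaged as a lemma. Your deterministic bound $N_k(n)/D_n \le k$ for dominated convergence is also fine (indeed the integrand is a conditional probability, hence bounded by $1$, which is the bound the paper uses).
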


\subsubsection{$m > 1$ Case}
We only have the following result for the $m>1$ case in the 
\emph{inverse de-preferential} model. 
\begin{theorem}
\label{ModelB_m}
Let $\left(G_n\right)_{n=1}^{\infty}$ be a sequence of 
random graphs following the 
\emph{inverse de-preferential attachment} model with
$m > 1$. Then 
$\exists$ constants $0<c<C$, such that,
as $n \rightarrow \infty$
\begin{equation}
P\left(c \leq \frac{d_i(n)}{m\sqrt{\log n}}\leq C\right) \to 1.    
\end{equation}
\end{theorem}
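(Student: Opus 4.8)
The plan is to mimic the proof of the $m=1$ result (Theorem~\ref{ModelB_exact}) while carefully tracking how the presence of $m$ half-edges inflates the degree of a fixed vertex by a factor of order $m$, and to settle for two-sided bounds rather than an exact constant because the intermediate updating of degrees within a single time step breaks the clean recursion available for $m=1$. First I would fix a vertex $v_i$ and write $d_i(n+1,0)=d_i(n)$, $d_i(n+1,m)=d_i(n+1)$, and record the one-step conditional increment
\begin{equation}
P\bigl(d_i(n+1,k+1)=d_i(n+1,k)+1 \mid \mathscr{F}_{n+1,k}\bigr)=\frac{C_{n+1,k}}{d_i(n+1,k)}=\frac{1}{d_i(n+1,k)\,D_{n+1,k}}.
\end{equation}
The key structural fact, exactly as in the $m=1$ analysis, is a two-sided control on the normalizer $D_{n+1,k}=\sum_{j=1}^n 1/d_j(n+1,k)$: since each $d_j\ge 1$ and there are $n$ vertices, $D_{n+1,k}\le n$; and by Jensen (or Cauchy--Schwarz) applied to $\sum_j 1/d_j$ against $\sum_j d_j = (2n-1)m + k$, one gets $D_{n+1,k}\ge n^2/\bigl((2n-1)m+k\bigr)\ge c'(m)\,n$ for an explicit constant depending on $m$. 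Hence there are constants $0<a<b$ (depending on $m$) with $a n \le D_{n+1,k}\le b n$ uniformly in $k\in\{0,\dots,m-1\}$ and all $n$.

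With this sandwich in hand, the increment over a full step satisfies, conditionally on $\mathscr{F}_{n,0}$ and on the value $d_i(n)=d$,
\begin{equation}
\frac{1}{bn}\sum_{k=0}^{m-1}\frac{1}{d+k}\;\le\;\E\bigl[d_i(n+1)-d_i(n)\mid \mathscr{F}_{n}\bigr]\;\le\;\frac{1}{an}\sum_{k=0}^{m-1}\frac{1}{d},
\end{equation}
so the conditional drift of $d_i(n)$ is comparable to $m/(n\,d_i(n))$. The idea is then to run a comparison argument: introduce the process $Y_n := d_i(n)^2$ and show that, because $x\mapsto x^2$ has second difference $2$, the drift of $Y_n$ is $\E[Y_{n+1}-Y_n\mid\mathscr{F}_n]\approx 2m\,d_i(n)\cdot\frac{1}{n\,d_i(n)} = \frac{2m}{n}$ up to multiplicative constants bounded away from $0$ and $\infty$. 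Summing, $\E[Y_n]\asymp m\log n$, and a Doob/optional-stopping or Freedman-inequality argument on the martingale part (whose predictable quadratic variation is also $O(\log n)$, hence of smaller order than $(\log n)^2$) gives concentration: $Y_n/(m^2\log n)$ is bounded above and below with probability tending to $1$, i.e. $c\le d_i(n)/(m\sqrt{\log n})\le C$ w.h.p. Equivalently, one can embed (as the $m=1$ proof does) into $m$ coupled pure-birth clocks and use that the $k$-th birth time of a pure birth process with rates $\asymp 1/(j\,\cdot\,\text{population})$ is of order $\exp(c k^2/m)$; I would use whichever of the two formulations — martingale or embedding — is cleaner to align with the earlier sections.

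The main obstacle is the lack of a closed recursion for the normalizer: for $m=1$ the quantity $D_n$ is controlled through the exact martingale structure of the degree evolution, whereas for $m>1$ the degrees of \emph{other} vertices also get updated during the intermediate sub-steps, so $D_{n+1,k}$ depends on the random choices made at sub-steps $0,\dots,k-1$. The resolution is precisely that the crude deterministic sandwich $an\le D_{n+1,k}\le bn$ (which does not require any fine understanding of the configuration) is already strong enough to pin down the order $\sqrt{\log n}$ and to lose only in the multiplicative constant — which is why the theorem is stated with unspecified $0<c<C$ rather than the sharp constant $\sqrt{2/\lambda^*}$ appearing for $m=1$. A secondary technical point is checking that the martingale increments are bounded (each step changes $d_i(n)$ by at most $m$) so that Freedman's inequality applies with the variance bound $O(\log n)$; this is routine.
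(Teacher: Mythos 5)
Your route is genuinely different from the paper's. The paper proves this theorem in three lines by combining three previously established ingredients of the Athreya--Karlin embedding: Theorem~\ref{embedding_m} (the degree $d_i(n)$ is distributed as $Z_i(\tau_n-\tau_i)$ for a pure birth process $Z_i$ with rates $1/j$), Theorem~\ref{purebirth_rate} ($Z(t)/\sqrt{t}\to\sqrt{2}$ a.s., via an SLLN for the birth times $T_k\sim k^2/2$), and Proposition~\ref{modelB_m_rate} ($(\tau_n-\tau_i)/c_n\to 1$ a.s.\ for a sequence $c_n=\Theta(m^2\log n)$, via an $L^2$-bounded martingale). The loss of the exact constant comes solely from the fact that $c_n$ is only sandwiched, via Lemma~\ref{ModelB_m_bound}, between $m^2\log n$ and $2m^2\log n$. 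Your deterministic sandwich $D_{n+1,k}\asymp n/m$ is exactly that lemma, and your parenthetical remark about embedding into pure-birth clocks is the paper's actual proof (though your statement that the $k$-th birth time is of order $\exp(ck^2/m)$ conflates the continuous-time birth time, which is $\asymp k^2$, with the graph index $n$ at which degree $k$ is reached, which is $\exp(\Theta(k^2/m^2))$). Your primary route --- a discrete-time drift analysis of $Y_n=d_i(n)^2$, whose compensator increment $2d\cdot\Theta(m^2/(nd))=\Theta(m^2/n)$ is deterministically sandwiched so that $\sum_k \E[\Delta Y_k\mid\mathscr{F}_k]\asymp m^2\log n$ --- is a sound and attractive alternative skeleton (your intermediate ``$2m/n$'' and ``$\E[Y_n]\asymp m\log n$'' drop a factor of $m$ hidden in $a,b$, but this only affects constants).

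The genuine gap is the concentration step, which you call routine but which as stated does not close. The predictable quadratic variation of the martingale part of $Y_n$ is not $O(\log n)$: each increment of $Y_n$ has conditional second moment of order $d_i(k)^2\cdot m\cdot \tfrac{m^2}{k\,d_i(k)}=\tfrac{m^3 d_i(k)}{k}$, so the quadratic variation is $\sum_k \Theta(m^3 d_i(k)/k)$, which is path-dependent and is controlled only if you already know $d_i(k)=O(m\sqrt{\log k})$ --- precisely what you are proving. With only the a priori bound $d_i(k)\geq m$ the best you get for the martingale part of $d_i(n)$ itself is quadratic variation $O(m\log n)$, and Freedman at the scale $\lambda=\epsilon m\sqrt{\log n}$ then yields a failure probability of order $\exp(-\epsilon^2 m/(2C))$ --- a constant, not $o(1)$ --- because $\lambda^2$ is comparable to, not much larger than, the variance; so neither the upper nor the lower bound in the theorem follows with probability tending to $1$. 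The fix is a short bootstrap that you should make explicit: the deterministic compensator bound gives $\E[Y_k]\leq C m^2\log k$, hence $\E[d_i(k)]\leq Cm\sqrt{\log k}$ by Cauchy--Schwarz; feeding this into the quadratic variation gives $\E\left[\langle \widetilde{M}\rangle_n\right]\leq C m^4(\log n)^{3/2}=o\bigl((m^2\log n)^2\bigr)$, and then Chebyshev (not Freedman, whose exponential form buys nothing here) gives $P\bigl(|\widetilde{M}_n|>\epsilon m^2\log n\bigr)=O\bigl((\log n)^{-1/2}\bigr)\to 0$, after which the deterministic two-sided bound on the compensator delivers the theorem. With that insertion your argument is complete and is an honest alternative to the paper's embedding proof; without it, the key quantitative claim is circular.
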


This last result is unsatisfactory, as it only states that 
asymptotically for every fixed vertex $i \geq 1$ the sequence 
of random variables 
\[\left(\frac{d_i(n)}{m\sqrt{\log n}}\right)_{n \geq 1}\]
remains \emph{tight}. We believe that they must be converging 
at least in
probability (and equivalently weakly) to a limiting constant.

\section{Proofs for the Linear De-preferential Models}
\label{proofs-lin}

This section includes the proofs of the main results for the linear de-preferential case. We rely on martingale techniques to derive our results. 


We start by presenting the proofs for Theorem \ref{modelA_m} and Theorem \ref{ModelA_m_clt}. Theorem \ref{ModelA_rate} and Theorem \ref{ModelA_central} follow by putting $m=1$.

\subsection{Proof of Theorem \ref{modelA_m}}
 We derive a recursive relation for $E[d_i(n)]$.
\begin{align*}
E[d_i(n+1)] &= E[d_i(n+1,m-1)] + \frac{1}{n-1} \left( 1- \frac{E[d_i(n+1,m-1)]}{(m-1) + m(2n-1)}\right)\\
&= \left(1- \frac{1}{n-1} \frac{1}{(m-1)+m(2n-1)}\right) E[d_i(n+1,m-1)] \\
&\ \qquad + \frac{1}{n-1}\\
&= \alpha_n E[d_i(n,m)] + \frac{\beta_n}{n-1}
\end{align*}
where $$\alpha_n= \displaystyle\prod_{j=0}^{m-1}\left(1- \frac{1}{(n-1)(j+ m(2n-1))}\right)$$ and 
$$\beta_n= 1+ \displaystyle\sum_{k=1}^{m-1} \displaystyle\prod_{j=m-k}^{m-1} \left(1- \frac{1}{(n-1)(j+ m(2n-1))}\right)$$. Let $a_n =E[d_i(n)], n \geq i$, $a_i = m$ w.p. 1. We define, for $n \geq i+1$
$$ \gamma_n = \prod_{k=i}^{n-1} \alpha_k$$ 
We define $\gamma_i =1$. Therefore, we have the recursion,
\begin{align*}
\frac{a_{n+1}}{\gamma_{n+1}} &= \frac{a_n}{\gamma_n} + \frac{\beta_n}{(n-1) \gamma_n}\\
\implies c_{n+1} &= c_n + \frac{\beta_n}{(n-1) \gamma_n} 
\end{align*}
where $c_n = \frac{a_n}{\gamma_n}$, $n \geq i$. Therefore, we have,
\begin{align*}
c_{n+1} &= m+ \frac{\beta_i}{(i+1)\gamma_i}+ \cdots + \frac{\beta_n}{(n+1)\gamma_n}
\end{align*}
We note that $ \beta_n \uparrow m$ as $n \to \infty$ and that $$\gamma_n \downarrow \kappa=  \displaystyle\prod_{j=0}^{m-1} \prod_{n=i}^{\infty} \left(1- \frac{1}{(n-1) (j+ m(2n-1))}\right)$$ as $n \to \infty$.
Let $h_n = 1+ \frac{1}{i+1} + \cdots + \frac{1}{n}$. Then we have $\frac{h_n}{\log n} \to 1$ as $n \to \infty$. Using these results, we can prove that $ \frac{c_n}{h_n} \to m \kappa^{-1}$ as $n \to \infty$. We observe that this implies $\frac{a_n}{m \log n} \to 1$ as $n \to \infty$.

Next we consider the variance. We will establish that for any $j \geq 1$ and $k=0 \cdots, m-1$, 
$$\Var[d_j(n+1, k+1)] \leq \Var[d_j(n+1,k)] + \frac{1}{n-1} $$
First we observe that
\[
E[d_j(n+1,k+1) | \mathscr{F}_{n+1,k}] 
= \left[ 1- \frac{1}{(n-1)(k + m (2n-1))}\right] d_j(n+1,k) + \frac{1}{n-1},
\]
and hence we get
\[
\Var[E[d_j(n+1,k+1) | \mathscr{F}_{n+1,k}]]
=  \left[ 1- \frac{1}{(n-1)(k + 2m (n-1))}\right]^2 \Var[d_j(n+1,k)].  
\]
Also, we have,
\begin{align*}
&\, \Var[d_j(n+1,k+1) | \mathscr{F}_{n+1,k}]
= \Var[d_j(n+1, k+1) - d_j(n+1,k) | \mathscr{F}_{n+1, k}] \\
&=\frac{1}{n-1}\left[ 1- \frac{d_j(n+1, k)}{ k + m(2n-1)}\right] \left[ 1 -  \frac{1}{n-1} \left[ 1- \frac{d_j(n+1, k)}{ k + m(2n-1)}\right]\right]\\
&\leq \frac{1}{n-1}\\
\end{align*}
Combining, we get, 
\begin{align*}
\Var[d_j(n+1,k+1)] &= \Var[E[d_j(n+1,k+1) | \mathscr{F}_{n+1,k}]] \\
&\ \qquad + E[\Var[d_j(n+1,k+1) | \mathscr{F}_{n+1,k}]] \\
&\leq \Var[d_j(n+1,k)] + \frac{1}{n-1}
\end{align*}
Using this relation repetitively, we get,
\begin{align*}
\Var[d_j(n+1)] &\leq \Var[d_j(n+1,m-1)] + \frac{1}{n-1}\\
&\leq \cdots \\
&\leq \Var[d_j(n)] + \frac{m}{n-1}
\end{align*}

Therefore, $\Var(d_j(n)) \leq C_0 \log n$ for some fixed constant $C_0$. Let $c <1$. For $n$ sufficiently large, $E[d_i(n)] \geq cm\log(n)$. We fix $\delta > 0$. Therefore, by Chebyshev inequality, for $n$ sufficiently large,
\begin{align*}
P\left[| \frac{d_j(n)}{E[d_j(n)]} -1 | > \delta\right] &\leq \frac{\Var(d_j(n))}{\delta^2 (E[d_j(n)])^2} \\
&\leq \frac{C_0 \log n} {\delta^2 \{c(m\log n)\}^2} \\
&= \frac{C_0}{\delta^2 c^2 m^2} \frac{1}{\log n} \\
&\to 0 \\ 
\end{align*}

This establishes that $\frac{d_j(n)}{E[d_j(n)]}  {\buildrel P \over \rightarrow} 1$ as $n\to \infty$. Combining this with the information that $\frac{E[d_j(n)]}{m\log n} \to 1$ as $n\to \infty$, we get that the degree of a fixed vertex scales like $m\log n$.

\subsection{Proof of Theorem \ref{ModelA_m_clt}}
The Central Limit Theorem will be derived by an application of the Martingale Central Limit Theorem. We need  to define a ``linear" sequence of random variables from the doubly indexed sequence $\{d_i(n,k)\}$ to apply the Central Limit Theorem. To this end, we define,
\begin{align*}
Z_{(k-1)m+j} &= d_i(i+k,j)
\end{align*}
Also, if $\{ \mathscr{F}_{n,k} \}$ denotes the natural filtration of the random graph process, then
we define 
\begin{align*}
\tilde{\mathscr{F}}_{(n-i-1)m+k} &= \mathscr{F}_{n,k} 
\end{align*}
We note that for $0\leq k \leq (n-i)m$, $Z_k = d_i(i+p,q)$, where $p = \floor*{\frac{k}{m}}+1$ and $q= k\pmod{m}$.

It may be easily observed that $P(Z_k- Z_{k-1}=1 | \tilde{\mathscr{F}}_{k-1}) = \zeta_k = 1- P(Z_k- Z_{k-1}=0 | \tilde{\mathscr{F}}_{k-1})$ where 
\begin{align*}
\zeta_k  &= \frac{1}{i+\floor*{\frac{k-1}{m}}-1} \left( 1- \frac{Z_{k-1}}{(k-1)\pmod{m} + m (2i +2 \floor*{\frac{k-1}{m}} -1)}\right)
\end{align*}
 We define the triangular array $\{ Y_{n,k} : k=1,\cdots, (n-i)m\}$,
\begin{align}
Y_{nk}&= \frac{Z_k - Z_{k-1} - E[Z_k - Z_{k-1} | \tilde{\mathscr{F}}_{k-1}]}{\sqrt{m\log n}} \nonumber
\end{align}
 We define $\sigma_{nk}^2 = E[Y_{nk}^2| \tilde{\mathscr{F}}_{k-1}] 
= \frac{1}{m\log n} \zeta_k(1-\zeta_k) $

Hence we have, $\displaystyle\sum_{k=1}^{(n-i)m} \sigma_{nk}^2 = I_1 - I_2$
where 
\begin{align}
I_1 &=  \frac{1}{m\log n} \sum_{k=1} ^{(n-i)m}\zeta_k \nonumber \\
I_2 &=  \frac{1}{m\log n} \sum_{k=1}^{(n-i)m} \zeta_k^2 \nonumber
\end{align}
Now, it follows that $I_2 {\buildrel P \over \rightarrow}  0$ as $n \to \infty$. 
\begin{align}
I_1 &= I_{11} - I_{12} \nonumber \\
I_{11} &= \frac{1}{m\log n}  \sum_{k =1}^{(n-i)m} \left[\frac{1}{i+ \floor*{\frac{k-1}{m}}-1} \right] \nonumber \\
I_{12} &= \frac{1}{m\log n} \sum_{k= 1}^{(n-i)m} \left[\frac{Z_{k-1}}{(i+\floor*{\frac{k-1}{m}}-1)((k-1)\pmod{m} + m (2i +2 \floor*{\frac{k-1}{m}} -1))} \right]\nonumber
\end{align}
$I_{11} \to 1$ as $ n \to \infty$. We will show that $I_{12} {\buildrel P \over \rightarrow} 0$. This will establish that $\displaystyle\sum_{k=1}^{(n-i)m} \sigma_{nk}^2{\buildrel P \over \rightarrow} 1$ as $n \to \infty$. We have seen in the proof of Theorem \ref{modelA_m} that $\frac{E[d_i(n)]}{m\log n} \to 1$ as $n \to \infty$. Also, $Z_k= d_i(i+1+\floor*{\frac{k-1}{m}}, k \pmod{m}) \leq d_i(i+1+\floor*{\frac{k-1}{m}})$.  We fix $\epsilon > 0$. Then $\exists N \geq i$ such that $\forall n \geq N$, $ \frac{E[d_i(n)]}{m\log n} \leq (1+ \epsilon) $. Therefore,
\begin{align}
E[I_{12}] &=  \frac{1}{m\log n} \sum_{k= 1}^{(n-i)m} \left[\frac{E[Z_{k-1}]}{(i+\floor*{\frac{k-1}{m}}-1)((k-1)\pmod{m} + m (2i +2 \floor*{\frac{k-1}{m}} -1))} \right]\nonumber  \\
&\leq \frac{1}{m\log n} \sum_{k= 1}^{(n-i)m} \left[\frac{E[Z_{k-1}]}{(i+\floor*{\frac{k-1}{m}}-1)(m (2i +2 \floor*{\frac{k-1}{m}} -1))} \right]\nonumber  \\
&\leq \frac{1}{m\log n} \sum_{k= 1}^{(n-i)m} \left[\frac{E[d_i(i+1+\floor*{\frac{k-1}{m}})]}{(i+\floor*{\frac{k-1}{m}}-1)(m (2i +2 \floor*{\frac{k-1}{m}} -1))} \right]\nonumber  \\
&= \frac{1}{m\log n} \sum_{j= 0}^{n-i-1} \left[\frac{E[d_i(i+j+1)]}{(i+j-1) (2i +2j -1)} \right]\nonumber  \\
&\leq   \frac{1}{m\log n} \sum_{j= 0}^{N+1} \frac{E[d_i(i+j-1)]}{(i+j-1)(2i+2j-1)}+  \frac{1}{\log n} \sum_{j= N+2}^{n-i-1} \frac{(1+\epsilon) \log(i+j-1)}{(i+j-1)(2i+2j-1)} \nonumber \\
&\to 0 \nonumber
\end{align}
An application of Markov inequality will establish that $I_{12}  {\buildrel P \over \rightarrow} 0$. Next, we note that $|Y_{nk}| \leq \frac{2}{\sqrt{m\log n}}$. Hence using the Martingale Central Limit Theorem, we have,
\begin{align}
\sum_{k=i+1}^{n} Y_{nk} &\cd N(0,1) \label{central_1} 
\end{align}
Finally, we observe that $ E[Z_k - Z_{k-1} | \tilde{\mathscr{F}}_{k-1}] = \zeta_k$. Hence, using similar arguments as above, we may conclude that
\begin{align}
\sum_{k=1}^{(n-i)m}\frac{ E[Z_k - Z_{k-1} | \tilde{\mathscr{F}}_{k-1}]- m\log n}{\sqrt{m\log n}} &{\buildrel P \over \rightarrow} 0 .\label{central_2}
\end{align}
Combining (\ref{central_1}) and (\ref{central_2}) completes the proof.

\subsection{Proof of Theorem \ref{ModelA_degree}}

We begin by observing a few important facts 
about the frequencies of degrees for the $m=1$ case.

\begin{proposition}
\label{ModelA_concentration}
Let $\left(G_n\right)_{n=1}^{\infty}$ be a sequence of 
random graphs following a 
\emph{linear de-preferential attachment} model with
$m=1$. 
Let 
\begin{equation}
N_k(n) := \sum_{i=1}^n \bone\left(d_i(n) = k\right),
\end{equation}
be the number of vertices with degree $k$. Then
$\exists\, C > 0$, such that, as $n \to \infty$.
\begin{equation}
P(\displaystyle\sup_k | N_k(n) - E[N_k(n)] | \geq C\sqrt{n\log n}) \to 0.
\end{equation}
\end{proposition}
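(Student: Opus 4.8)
The plan is to fix a degree value $k$, apply the Azuma--Hoeffding inequality to the Doob martingale obtained by exposing the attachment choices one at a time, and then take a union bound over all relevant values of $k$. Concretely, for fixed $k$ I would set up the martingale $M_t = E[N_k(n) \mid \mathscr{F}_t]$, so that $M_1 = E[N_k(n)]$, $M_n = N_k(n)$, and hence $N_k(n) - E[N_k(n)] = \sum_{t} (M_t - M_{t-1})$. The entire difficulty is then reduced to showing that the increments are bounded by an absolute constant, $|M_t - M_{t-1}| \le C_0$, uniformly in $t$, $n$ and $k$. Writing $g(e) = E[N_k(n) \mid \mathscr{F}_{t-1}, e_t = e]$, one has $M_t - M_{t-1} = g(e_t) - \sum_e P(e_t = e \mid \mathscr{F}_{t-1})\, g(e)$, so it suffices to bound $|g(e) - g(e')|$ for any two admissible choices $e, e'$ of the edge added at that step.

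The key step, and the main obstacle, is this bounded-difference estimate, which I would establish by a coupling that crucially exploits a special feature of the linear model: the normalizing constant $\tfrac{1}{n-1}$ in $P(e_{n+1} = \{v_{n+1}, v_i\} \mid \mathscr{F}_n) = \tfrac{1}{n-1}\bigl(1 - \tfrac{d_i(n)}{2n-1}\bigr)$ is \emph{deterministic}: it depends only on $n$, not on the degree configuration (consistent with $\sum_i d_i(n) = 2n-1$). I would run two copies $A$ and $B$ of the process that share the history $\mathscr{F}_{t-1}$ but attach $e_t$ to different vertices, and couple all later steps maximally. Let $\Delta_s(\ell) = d_\ell^A(s) - d_\ell^B(s)$ record the degree discrepancies, so that $\|\Delta_t\|_1 = 2$ initially. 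Because the attachment probability to a vertex $v_\ell$ depends, through the deterministic normalization, \emph{only} on $d_\ell$, the two attachment laws at step $s$ differ in total variation by $O(\|\Delta_s\|_1 / s^2)$; under the maximal coupling the two copies then choose the same vertex except with this probability, in which case $\|\Delta\|_1$ is unchanged, and when they disagree it grows by at most $2$ (the newly added vertex always has degree $1$ in both copies, so it never contributes). This gives the recursion $E[\|\Delta_{s+1}\|_1 \mid \mathscr{F}_s] \le \|\Delta_s\|_1\,(1 + O(s^{-2}))$, whence $E\|\Delta_n\|_1 \le 2\prod_{s\ge 2}(1 + O(s^{-2})) = O(1)$. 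Since $|N_k^A(n) - N_k^B(n)| \le \#\{\ell : \Delta_n(\ell) \neq 0\} \le \|\Delta_n\|_1$, we conclude $|g(e) - g(e')| \le E|N_k^A(n) - N_k^B(n)| \le C_0$ for an absolute constant $C_0$.

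With $|M_t - M_{t-1}| \le C_0$ in hand, Azuma--Hoeffding yields, for each fixed $k$,
\[
P\bigl(|N_k(n) - E[N_k(n)]| \ge a\bigr) \le 2\exp\Bigl(-\tfrac{a^2}{2 C_0^2 (n-1)}\Bigr).
\]
Taking $a = C\sqrt{n \log n}$ turns the right-hand side into $2\, n^{-C^2/(2C_0^2)}$. Finally I would take a union bound over $k$: since the total degree equals $2n-1$, every vertex degree lies in $\{1, \ldots, 2n-1\}$, so at most $O(n)$ values of $k$ can contribute to the supremum (for larger $k$ both $N_k(n)$ and $E[N_k(n)]$ vanish). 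Hence
\[
P\Bigl(\sup_k |N_k(n) - E[N_k(n)]| \ge C\sqrt{n\log n}\Bigr) \le O(n)\cdot 2\, n^{-C^2/(2C_0^2)} \to 0,
\]
provided $C$ is chosen large enough that $C^2/(2C_0^2) > 1$. The delicate point throughout is the coupling estimate of the second paragraph; the determinism of the normalization is exactly what prevents the discrepancy from cascading and forces $E\|\Delta_n\|_1$ to remain bounded, so that the Doob increments are genuinely $O(1)$ rather than growing with $n$.
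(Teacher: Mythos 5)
Your proposal is correct and follows essentially the same route as the paper: the Doob martingale $M_l = E[N_k(n)\mid \mathscr{F}_l]$, a uniform bound on its increments, Azuma--Hoeffding at scale $C\sqrt{n\log n}$, and a union bound over the $O(n)$ degrees that can actually occur. The only difference is that the paper dispatches the increment bound with the one-line claim that ``the degree of at most $2$ vertices is affected'' (giving the constant $2$), whereas you justify it via the maximal coupling and the $\prod_s\bigl(1+O(s^{-2})\bigr)$ estimate on $E\|\Delta_n\|_1$ --- which is precisely the argument needed to make that assertion rigorous for the increments other than the last one, at the harmless cost of a larger absolute constant $C_0$.
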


\begin{proof}
We first observe that $N_k(n)= 0 $ if $k > n$. 
\begin{align*}
\ 
&    P(\displaystyle\sup_k |N_k(n) - E[N_k(n)] | \geq C \sqrt{n \log n} ) \\
&= P(\displaystyle\max_{k \leq n} |N_k(n) - E[N_k(n)] | \geq C \sqrt{n \log n} )\\
&\leq \displaystyle\sum_{k=1}^{n} P(|N_k(n) - E[N_k(n)] | \geq C \sqrt{n \log n} )
\end{align*}

We will prove that uniformly in $k\leq n$, 
\[
P(|N_k(n) - E[N_k(n)] | \geq C \sqrt{n \log n} )= o(n^{-1}).
\]

For $l=1.2, \cdots, n$, we define $M_l = E[N_k(n) | \mathscr{F}_l]$. $\{M_l\}_{l=1}^{n}$ is a martingale. $M_1= E[N_k(n) | \mathscr{F}_1] =E[N_k(n)]$ as $\mathscr{F}_1$ is the trivial sigma field. Also, $M_n = N_k(n)$. Therefore, $M_n- M_1 = N_k(n) - E[N_k(n)]$.  As the degree of at most 2 vertices is affected due to the addition of the $l^{th}$ vertex, we have $|M_l - M_{l-1}| \leq 2$. Then, by the Azuma-Hoeffding inequality, 
\begin{align*}
P(|N_k(n) - E[N_k(n)] | \geq a) &\leq 2 \exp\left[ -\frac{a^2}{ 8n}\right]  \\
\end{align*}

Taking $a= C\sqrt{n \log n}$ for any $C > 2\sqrt{2}$, we have, 
\begin{align*}
P(|N_k(n) - E[N_k(n)] | \geq C\sqrt{n \log n} ) &\leq 2 \exp\left[ -\frac{C^2 \log n }{ 8}\right] \\
&= o(n^{-1})\\
\end{align*}

This completes the proof of this proposition. 
\end{proof}

Our next proposition identifies the limit of the empirical degree distribution. 

\begin{proposition}
\label{ModelA_expected}
In the setup of the previous result, that is, 
Proposition ~\ref{ModelA_concentration},
$\exists$ a constant $C_1$ such that $\forall n \geq 2$ and all $k \in \mathbb{N}$,
 $$| E[N_k(n)] - p_k n | \leq C_1 \log n$$ where $p_k = (\frac{1}{2})^k$, $k\geq 1$. 
\end{proposition}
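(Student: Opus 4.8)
The plan is to derive a recursion for $E[N_k(n)]$ in $n$ and compare it term-by-term with the recursion satisfied by $p_k n$, tracking the accumulated error. Conditioning on $\mathscr{F}_n$, the new vertex $v_{n+1}$ arrives with degree $1$, and it attaches to an existing vertex $v_i$ with probability $\frac{1}{n-1}\left(1-\frac{d_i(n)}{2n-1}\right)$. Hence a vertex of degree $k\geq 2$ is promoted to degree $k+1$ with conditional probability $\frac{1}{n-1}\left(1-\frac{k}{2n-1}\right)$, independently of how many other vertices share that degree in the sense that expectations add. Taking expectations gives, for $k\geq 2$,
\begin{align*}
E[N_k(n+1)] &= E[N_k(n)]\left(1-\frac{1}{n-1}\left(1-\frac{k}{2n-1}\right)\right) + E[N_{k-1}(n)]\,\frac{1}{n-1}\left(1-\frac{k-1}{2n-1}\right),
\end{align*}
while for $k=1$ one gets the extra $+1$ from the incoming vertex and no promotion-in term:
\begin{align*}
E[N_1(n+1)] &= E[N_1(n)]\left(1-\frac{1}{n-1}\left(1-\frac{1}{2n-1}\right)\right) + 1.
\end{align*}

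Next I would write $E[N_k(n)] = p_k n + \Delta_k(n)$ with $p_k = 2^{-k}$ and check that the \emph{main terms} balance: substituting $p_k n$ into the right-hand sides above, the leading $O(1)$ contributions cancel because $p_k = \tfrac12 p_{k-1}$ and $p_1 = \tfrac12$, leaving a discrepancy that is $O(1/n)$ per step coming from (i) the $\frac{k}{2n-1}$ correction factors and (ii) the mismatch between $p_k(n+1)$ and $p_k n$. This yields a recursion of the form
\begin{align*}
\Delta_k(n+1) &= \Delta_k(n)\left(1-\frac{\theta_{k,n}}{n-1}\right) + \frac{1}{n-1}\Delta_{k-1}(n) + \varepsilon_k(n),
\end{align*}
where $0\le \theta_{k,n}\le 1$ and $|\varepsilon_k(n)| \le \tfrac{C}{n}$ uniformly in $k$ (here I use that $N_{k-1}(n)\le n$, so its coefficient times its value contributes a uniformly bounded error). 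Since the multiplicative factor is at most $1$ and the $\Delta_{k-1}$-coupling coefficient is $O(1/n)$, an induction on $k$ together with summing the errors $\sum_{\ell=2}^{n}\frac{C}{\ell} = O(\log n)$ gives $|\Delta_k(n)| \le C_1\log n$ for a constant $C_1$ independent of $k$ and $n$. The base case $k=1$ is the scalar recursion above, whose solution obeys $|\Delta_1(n)|=O(\log n)$ by the same summation; one should also handle small $n$ (and $k>n$, where $N_k(n)=0$ and $p_k n \le p_k\cdot n$, which is $\le 1$ once $k \ge \log_2 n$, so the bound is trivially fine there) separately to absorb them into the constant.

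The main obstacle is making the induction on $k$ genuinely \emph{uniform}: naively, each step $k\to k+1$ could inflate the constant, so one must argue that the $\frac{1}{n-1}\Delta_{k-1}(n)$ feed-in term, when propagated through $\prod_{\ell}(1-\theta_{k,\ell}/(\ell-1))$, contributes only $O(\log n)$ and not something growing in $k$. The clean way is to bound $|\Delta_{k-1}(n)|\le C_1\log n$ by the inductive hypothesis, note that $\sum_{\ell=2}^{n}\frac{C_1\log \ell}{\ell-1}\cdot(\text{telescoping product}\le 1)$ is $O((\log n)^2)$ — which is too weak — and instead observe that in fact $E[N_{k-1}(n)]\le n$ directly (no need to go through $\Delta_{k-1}$), so the feed-in term's expectation is genuinely $O(1/n)\cdot n = O(1)$ per step only in a crude sense; the sharper route is to note $\sum_k N_k(n) = n$ forces $\sum_k |\Delta_k(n)|$ to be controlled, or simply to carry the induction with the combined error bookkeeping so the constant $C_1$ is chosen once at the end. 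I would present it via the explicit solution of the linear recursion, $\Delta_k(n) = \sum_{\ell<n}\left(\prod_{\ell<j<n}(1-\theta_{k,j}/(j-1))\right)\left(\frac{\Delta_{k-1}(\ell)}{\ell-1}+\varepsilon_k(\ell)\right)$, bound each product by $1$, and conclude $|\Delta_k(n)|\le \sum_{\ell<n}\left(\frac{|\Delta_{k-1}(\ell)|}{\ell-1}+\frac{C}{\ell}\right)$, then close the induction using the elementary fact that if $|\Delta_{k-1}(\ell)|\le C_1\log\ell$ one needs $C_1$ large enough that $\sum \frac{C_1\log\ell}{\ell} + \sum\frac{C}{\ell}$ stays below $C_1\log n$ — which does not quite work, so the correct statement I would actually prove first is the sharper $|\Delta_{k-1}(\ell)|\le C_1$ (bounded, not log) for the feed-in and reserve the $\log n$ only for the $k=1$ diagonal and the error sum; I'll reconcile these in the writeup so the final uniform bound is $C_1\log n$.
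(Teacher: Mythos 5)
Your derivation of the recursion for $E[N_k(n+1)]$ and for the error $\Delta_k(n)=E[N_k(n)]-p_k n$ matches the paper's (there $\Delta_k(n)$ is called $\epsilon_k(n)$), including the key fact that the per-step forcing term is $O(1/n)$ uniformly in $k$. The gap is exactly the one you flag at the end and never resolve: the induction cannot be run on $k$. Solving the linear recursion in $n$ for fixed $k$ and feeding in $|\Delta_{k-1}(\ell)|\le C_1\log\ell$ yields $\sum_{\ell\le n}C_1\log\ell/\ell=O((\log n)^2)$, as you note; and your proposed repair --- that $|\Delta_{k-1}(\ell)|$ is in fact $O(1)$ --- is asserted without proof. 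Even if one establishes boundedness for each fixed $k$ by the scalar fixed-point analysis, the resulting constant grows with $k$ (roughly by an additive $O(1)$ per level, hence linearly in $k$), so it cannot deliver a bound uniform over $k$, which is what the $\max_k$ in Theorem~\ref{ModelA_degree} requires. The ``reconcile in the writeup'' step is therefore a genuine missing idea, not bookkeeping.

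The paper closes the argument by inducting on $n$ instead, with the hypothesis $\max_k|\epsilon_k(n)|\le C_1\log n$ holding simultaneously for all $k$. What makes this work is that in
\[
\epsilon_k(n+1)=\Bigl(1-\tfrac{1}{n-1}\bigl(1-\tfrac{k}{2n-1}\bigr)\Bigr)\epsilon_k(n)+\tfrac{1}{n-1}\bigl(1-\tfrac{k-1}{2n-1}\bigr)\epsilon_{k-1}(n)+\gamma_k(n),\qquad |\gamma_k(n)|\le \tfrac{\Delta}{n+1},
\]
the two coefficients sum to $1+\tfrac{1}{(n-1)(2n-1)}=1+O(n^{-2})$. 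Applying the inductive bound to $\epsilon_k(n)$ and $\epsilon_{k-1}(n)$ simultaneously gives $|\epsilon_k(n+1)|\le C_1\log n\bigl(1+\tfrac{1}{(n-1)(2n-1)}\bigr)+\tfrac{\Delta}{n+1}\le C_1\log n+\tfrac{C_1}{n+1}\le C_1\log(n+1)$ for $C_1$ large, using $\log(n+1)-\log n\ge \tfrac{1}{n+1}$. No information is propagated across $k$, so no loss accumulates in $k$. If you reorganize your argument around this single induction on $n$ (keeping your recursions as they stand, and treating $k=1$ separately as you already do), the proof closes.
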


\begin{proof}
We first note that $p_k$ is the unique solution of the recurrence relation 
\begin{align*}
2p_k &= p_{k-1} + 1 _{(k=1)} \\
\end{align*}

where we define $p_0 = 0$. 

Recall, $N_k(n)$ denotes that number of vertices with degree exactly $k$ in the graph $G_n$, thus,
\footnotesize
\begin{equation}
N_K(n+1) - N_k(n) = \left\{
\begin{array}{rl}
+1 & \mbox{if\ \ } k=1 \mbox{\ or\ } \\
   & k > 1 
\mbox{\ and\ } (n+1)\mbox{-th vertex joins to a\ }(k-1) \mbox{\ degree vertex}\\
-1 &  \mbox{if\ \ }  (n+1)\mbox{-th vertex joins to a\ }k \mbox{\ degree vertex}\\
0 & \mbox{otherwise.} \\
\end{array}
\right.
\end{equation}
\normalsize
Thus,
\begin{align*}
&\, E[ N_k(n+1) - N_k (n) | \mathscr{F}_n]\\
&= \frac{1}{n-1} \left( 1- \frac{k-1}{2n-1} \right) N_{k-1} (n) - \frac{1}{n-1} \left(1- \frac{k}{2n-1}\right)N_k(n) + 1_{(k=1)}
\end{align*}
We define $\epsilon_k(n) = E[N_k(n)] - np_k$. We wish to prove that $\displaystyle\max_k |\epsilon_k(n)| \leq C_1 \log n $ for some fixed constant $C_1 > 0$. We first note that 
\begin{align*}
&\, E[N_k(n+1)] \\
&= \left(1- \frac{1}{n-1}\left( 1- \frac{k}{2n-1}\right)\right) E[N_k(n)] + \frac{1}{n-1} \left(1- \frac{k-1}{2n-1}\right) E[N_{k-1}(n)] + 1_{(k=1)}
\end{align*}
Also,
\begin{align*}
(n+1)p_k &= np_k + p_k \\
&= np_k + \frac{1}{2} p_{k-1} +\frac{1}{2} 1_{(k=1)} \\
&= \left(1- \frac{1}{n-1}\left(1- \frac{k}{2n-1} \right) \right)np_k + \frac{np_{k-1}}{n-1}\left(1- \frac{k-1}{2n-1}\right) \\
&+ \frac{np_k}{n-1} \left(1- \frac{k}{2n-1} \right) + \left( \frac{1}{2} - \frac{n}{n-1} \left(1- \frac{k-1}{2n-1} \right)\right) p_{k-1} + \frac{1}{2}1_{(k=1)} 
\end{align*}
Combining, we get, 
\begin{align*}
\epsilon_k(n+1) &=  \left(1- \frac{1}{n-1}\left(1- \frac{k}{2n-1} \right) \right) \epsilon_k(n) +  \frac{1}{n-1} \left(1- \frac{k-1}{2n-1}\right) \epsilon_{k-1}(n) \\
&+ \frac{1}{2}1_{(k=1)} - \gamma_k(n), 
\end{align*}
where $\gamma_k(n)= \frac{np_k}{n-1} \left(1- \frac{k}{2n-1} \right) + \left( \frac{1}{2} - \frac{n}{n-1} \left(1- \frac{k-1}{2n-1}\right)\right) p_{k-1} $. We will now use induction on $n$ to complete the proof. \\
We start with the base case $n=2$. In this case, $N_k(2) = 0$ for $ k>2$. Now, $|N_1(2) - 2p_1| = 0$ and $|N_2(2)- 2p_2|= \frac{1}{2}$. Therefore, 
$\displaystyle\max_k |\epsilon_k(2)| < C_1 \log 2$ holds whenever $C_1 > \frac{1}{\log 2}$. So the proposition holds for the base case. We will now assume that the proposition is true for $n$ and extend the proposition to $n+1$. So we assume that we have found a $C_1 >0 $ such that $ \displaystyle\max_k |\epsilon_k(n)| \leq C_1 \log n$. We first extend the proposition for $k=1$. We have,
\begin{align*}
\epsilon_1(n+1) &=  \left(1- \frac{1}{n-1}\left(1- \frac{1}{2n-1} \right) \right) \epsilon_1(n) + \frac{1}{2} - \frac{n}{2(n-1)} \left(1- \frac{1}{2n-1}\right) \\
&=  \left(1- \frac{1}{n-1}\left(1- \frac{1}{2n-1} \right) \right) \epsilon_1(n) - \frac{1}{2(n-1)} + \frac{n}{2(n-1)(2n-1)}, \\
\end{align*}
and thus we get 
\begin{align*}
|\epsilon_1(n+1)| &\leq  \left(1- \frac{1}{n-1}\left(1- \frac{1}{2n-1} \right) \right) |\epsilon_1(n)|  + \frac{1}{2(n-1)} + \frac{n}{2(n-1)(2n-1)}\\
&\leq |\epsilon_1(n)|  + \frac{1}{2(n-1)} + \frac{n}{2(n-1)(2n-1)} \\
&\leq C_1 \log n + \frac{C_1}{n+1} \leq C_1 \log(n+1).
\end{align*}

The inequalities are true whenever $C_1$ has been chosen so that $\frac{C_1}{n+1} \geq \frac{1}{2(n-1)} + \frac{n}{2(n-1)(2n-1)} $ for all $n \geq 2$. We observe that choosing $C_1 > 2$ suffices, as $ \frac{C_1}{n+1} - \frac{1}{2(n-1)} - \frac{n}{2(n-1)(2n-1)} =  \frac{C_1}{n+1} - \frac{3n-1}{2(n-1)(2n-1)} \geq \frac{2}{(n+1)} - \frac{2}{2n-1}= \frac{2(n-2)}{(n+1)(2n-1)} >0$. We have also used that $\log(n+1) \geq \log n + \frac{1}{n+1}$. This follows from the observation that 
\begin{align*}
\log(n+1) &= \int_{1}^{n+1} \frac{1}{y} dy \\
&= \int_{1}^{n} \frac{1}{y} dy + \int_{n}^{n+1} \frac{1}{y}dy \\
&\geq \log n + \frac{1}{n+1} 
\end{align*}

This extends the proposition for $k=1$. We now consider the case where $k >1$. We first look at the function $\gamma_k(n)$. We have,
\begin{align*}
|\gamma_k(n)| &= | \frac{np_k}{n-1} \left(1- \frac{k}{2n-1} \right) + \left( \frac{1}{2} - \frac{n}{n-1} \left(1- \frac{k-1}{2n-1} \right)\right) p_{k-1}| \\
&= | \frac{np_k}{n-1} - \frac{(n+1)p_{k-1}}{2(n-1)} + \frac{n(k-1)p_{k-1}}{(n-1)(2n-1)} - \frac{nkp_k}{(n-1)(2n-1)}| \\
&= | \frac{np_k}{n-1} - \frac{(n+1)p_k}{(n-1)} + \frac{n(k-1)p_{k-1}}{(n-1)(2n-1)} - \frac{nkp_k}{(n-1)(2n-1)}| \\
&= | -\frac{p_k}{n-1} + \frac{n}{(n-1)(2n-1)} \{ (k-1) p_{k-1} - kp_k \}|\\ 
&\leq \frac{1}{n-1} +  \frac{n}{(n-1)(2n-1)} \displaystyle\sup_k |(k-1)p_{k-1} - p_k|  \\
&\leq \frac{\Delta}{n+1} \\
\end{align*}

The last inequality follows from the observation that $|\{ (k-1) p_{k-1} - kp_k \}| = | kp_k - p_{k-1}| \leq 2$. Now, we have, 
\begin{align*}
&|\epsilon_k(n+1)| \\
&= \left(1- \frac{1}{n-1} \left( 1- \frac{k}{2n-1}\right)\right) |\epsilon_k(n)| + \frac{1}{n-1} \left(1- \frac{k-1}{2n-1}\right)|\epsilon_{k-1}(n)|+|\gamma_k(n)|  \\
&\leq  \left(1- \frac{1}{n-1} \left( 1- \frac{k}{2n-1}\right)\right)C_1 \log n + \frac{1}{n-1} \left(1- \frac{k-1}{2n-1}\right) C_1 \log n + \frac{\Delta}{n+1} \\
&\leq C_1 \log n + \frac{1}{(n-1)(2n-1)}C_1 \log n + \frac{\Delta}{n+1}  \\
&\leq C_1 \log n + \frac{C_1}{n+1} \\
\end{align*}

The inequalities hold as long as $C_1$ is chosen large enough so that $\frac{C_1 }{n+1} \geq  \frac{1}{(n-1)(2n-1)} C_1 \log n + \frac{\Delta}{n+1} $ for all $n \geq 2$. This holds as long as $C_1 \geq \frac{\Delta}{ 1- 
\frac{(n+1) \log n}{(n-1)(2n-1)}}$ for all $n \geq 2$. \\
So, we select the constant $C_1$ such that it satisfies the requirements outlined above. This completes the proof by induction. 
\end{proof}

 We assume, without loss of generality, that $C_1$ in Proposition \ref{ModelA_expected} is chosen to be larger than $C$ in Proposition \ref{ModelA_concentration}. 
We have established that $P (\displaystyle\max_k | N_k(n) - E[N_k(n)]| > C_1 \sqrt{n \log n}) =o(1)$. Also, $\displaystyle\max_k| E[N_k(n)] - p_k n | \leq C_1 \log n$. \\
\begin{align*}
P(\displaystyle\max_k |N_k(n) - np_k| > C_1 (\log n + \sqrt{n\log n}) &\leq\\
 P(\displaystyle\max_k | N_k(n) - E[N_k(n)]| > C_1 \sqrt{n \log n}) &= o(1)
\end{align*}

This implies that 
\begin{align*}
P\left( \displaystyle\max_k | P_k(n) - p_k | > C_1 \left( \frac{\log n}{n} + \sqrt{\frac{\log n}{n}}\right)\right) &= o(1).
\end{align*}
This completes the proof. 

\subsubsection{Proof of Theorem \ref{ModelA_sizebiased}}
We have, by Theorem \ref{ModelA_degree},
\begin{align*}
P(e_{n+1} = \{v_{n+1},v\} , d_v(n)=k|\mathscr{F}_n) &= N_k(n) \frac{1}{n-1}\left(1- \frac{k}{2n-1}\right) \\ 
&\cd \left(\frac{1}{2}\right)^k 
\end{align*}
We note that the sequence $N_k(n) \frac{1}{n-1}\left(1- \frac{k}{2n-1}\right)$ is uniformly bounded by $2$ and hence uniformly integrable. Therefore, by taking expectations, we have,
\begin{align*}
P(e_{n+1} = \{v_{n+1},v\} , d_v(n)=k) &\to \left(\frac{1}{2}\right)^k
\end{align*}

\section{Proofs for the Inverse De-Preferential Models}
\label{proofs-inv}
In this section we provide proofs of the main results for the inverse de-preferential case.

\subsection{Embedding}
\label{sec;embedding}
We begin by providing two very natural yet important embeddings for the inverse de-preferential models, which 
are namely, \emph{Crump-Mode-Jagers(CMJ) branching process} to be used for the $m=1$ case and 
a sequence of 
\emph{pure birth processes} to be used for the $m>1$ case. Such natural couplings will allow us to analyze various properties of the random graph sequence. The approach 
for the $m=1$ case will follow the analysis by Rudas, T\'{o}th and Valko(\cite{RudTotVal2007}). 

\subsubsection{CMJ Branching Process}
\label{onehalfedge}
$\mathcal{G}$ denotes the space of finite rooted trees. If $\mathscr{T} \in \mathcal{G}$ and $x$ is a vertex of $\mathscr{T}$, then we define $(\mathscr{T})_{\downarrow x}$ as the sub-tree consisting of the the descendants of $x$.  \\
We start with a graph consisting of a vertex with a half edge. Each vertex reproduces independently according to i.i.d copies of a pure birth process $\{ \xi(t): t \geq 0\}$.  $\xi(0) = 1 $ w.p. 1 and 
\begin{align*}
P(\xi(t+h)= k+1 | \xi(t)=k) &= \frac{h}{k} + o(h)
\end{align*}

Let $\{\Upsilon(t): t\geq 0 \}$ denote the randomly growing tree. For every $t \geq 0$, $\Upsilon(t) \in \mathcal{G}$. We define the following sequence of random times 
\begin{align*}
\tau_2 & := \inf\{t \geq 0: |\Upsilon(t)|=2\} \\
\tau_3 &:= \inf\{t \geq \tau_2: |\Upsilon(t)| = 3\} \\
&\vdots\\
\tau_n&:= \inf\{t \geq \tau_{n-1} : |\Upsilon(t)|= n \} \\
&\vdots
\end{align*}
We look at the tree $\Upsilon(t)$ at the random times $\{\tau_n\}$. Let $\{G_n\}_{n=2}^{\infty}$ denote the random graph sequence under the inverse de-preferential attachment case when $m=1$. This gives us the following result. 
\begin{theorem}
The sequence of random graphs $\{G_n\}_{n=2}^{\infty}$ is distributed identically as $\{ \Upsilon(\tau_n)\}_{n=2}^{\infty}$.
\end{theorem}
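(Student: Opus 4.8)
The plan is to show that the embedded process $\{\Upsilon(\tau_n)\}_{n\ge 2}$, viewed as a discrete-time Markov chain on $\mathcal{G}$, has exactly the same transition probabilities as the inverse de-preferential attachment chain $\{G_n\}$ with $m=1$; since both chains start from the same initial configuration (a single edge with a half-edge, i.e. $|\Upsilon(\tau_2)|=2$), their laws as sequences then agree. The key probabilistic input is the standard fact about independent pure birth (Yule-type) processes: if at some time $t$ the tree $\Upsilon(t)$ has vertices $x_1,\dots,x_n$ with current ``local clock'' values $\xi_{x_1}(t)=d_1,\dots,\xi_{x_n}(t)=d_n$, then by the superposition/competing-exponentials property the next birth in the whole population occurs at rate $\sum_{j=1}^n \tfrac{1}{d_j}$, and conditionally on a birth occurring, it happens at vertex $x_i$ with probability $\bigl(\tfrac{1}{d_i}\bigr)\big/\bigl(\sum_{j=1}^n \tfrac{1}{d_j}\bigr)$. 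This is precisely the conditional attachment rule defining the inverse de-preferential model, namely $P(e_{n+1}=\{v_{n+1},v_i\}\mid \mathscr{F}_n)=C_n/d_i(n)$ with $C_n^{-1}=\sum_{j=1}^n 1/d_j(n)$.

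First I would make the identification between ``degree in $G_n$'' and ``value of the birth process at the corresponding vertex.'' The root starts with one half-edge, so effectively its degree counter starts at $1$; every other vertex enters the tree with exactly one edge, so its degree also starts at $1$; and each time a new vertex attaches to an existing vertex $x$, the degree of $x$ increases by $1$. This is exactly the dynamics of $\xi$: $\xi(0)=1$ and $\xi$ jumps from $k$ to $k+1$, the jump being ``caused'' by the arrival of a new child. Hence, tracking $\Upsilon(\tau_n)$ together with the birth-counts at each vertex reproduces the degree sequence of $G_n$ together with its tree structure. Second, I would invoke the embedding/competing-clocks computation of the previous paragraph to check that, given $\Upsilon(\tau_n)$ with its degree labels, the vertex receiving the $(n+1)$-st arrival — i.e. the next birth, which by definition of $\tau_{n+1}$ is exactly the event that grows the tree from size $n$ to $n+1$ — is chosen with probability proportional to the reciprocal of its current degree, matching the inverse de-preferential law. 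Third, I would note that the memoryless property of the exponential holding times makes $\{\Upsilon(\tau_n)\}$ a genuine Markov chain (the ``ages''/residual lifetimes at time $\tau_n$ are irrelevant to which vertex gives the next birth, only the current counters matter), so matching one-step transitions suffices to match the law of the whole sequence. A clean way to organize this is a short induction on $n$: assuming $\Upsilon(\tau_n)\stackrel{d}{=}G_n$ jointly with the degree labeling, the displayed transition computation gives $\Upsilon(\tau_{n+1})\stackrel{d}{=}G_{n+1}$.

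The main obstacle, such as it is, is purely a matter of careful bookkeeping rather than of any hard estimate: one must be scrupulous that the initial offsets are right (the root's half-edge makes its counter start at $1$, consistent with every later vertex, so the rule $d_i\propto$ reciprocal-degree is uniform across all vertices including the root from the outset), and that $\tau_{n+1}$ is finite almost surely so the chain is well-defined for all $n$ — this follows because with $n$ vertices the total birth rate $\sum_j 1/d_j$ is strictly positive, indeed at least $1/(n-1)$ or so crudely, so the embedded times form a strictly increasing sequence and the construction does not ``explode'' prematurely. Once these points are pinned down, the equality in distribution of the two sequences is immediate from the one-step matching.
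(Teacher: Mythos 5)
Your proof is correct, and it supplies precisely the justification the paper leaves implicit: the paper states this embedding theorem with no proof at all (just ``This gives us the following result''), and the competing-exponentials/memorylessness argument you give --- each vertex with counter $d_j$ births at rate $1/d_j$, so the next arrival attaches to $v_i$ with probability $\left(1/d_i\right)\big/\sum_{j}\left(1/d_j\right)$, matching the inverse de-preferential rule, with the degree/counter identification anchored by the root's half-edge --- is the standard and clearly intended one. Your bookkeeping of the initial configuration and the a.s.\ finiteness of the $\tau_n$ is also right, so nothing is missing.
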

We will find it useful to consider the pure birth process $\{\xi(t): t \geq 0\} $ as a point process, 
where the points occur at the birth times $\{T_n\}_{n=1}^{\infty}$ of the pure birth process. 
We define the expected Laplace Transform of the point process $\{\xi(t): t \geq 0\} $
\begin{align*}
\hat{\rho} (\lambda) &= E \left(\displaystyle\int_{0}^{\infty} \exp(- \lambda t) d\xi(t)\right)\\
& = \displaystyle\sum_{n=1}^{\infty}\prod_{i=0}^{n-1} \frac{1}{(i+1)\lambda + 1}
\end{align*}
$\hat{\rho}(\lambda)$ may be calculated easily because $\{T_n - T_{n-1}\}$ are independent random variables with $Exp(1/n)$ distribution, that is, exponential distributions with mean $n$.
We observe that the equation $\hat{\rho}(\lambda) =1$ has a unique root $\lambda= \lambda^*$. $\lambda^*$ is usually referred to as the Malthusian 
Parameter in the context of Crump-Mode-Jagers Branching Processes. The process $\{\Upsilon(t):t\geq 0\}$ is a supercritical, Malthusian Branching Process. 
Then using a theorem from O.Nerman (1981)(Theorem A, \cite{RudTotVal2007}), we have the following result. 
\begin{theorem}
\label{embedding}
Consider a bounded function $\phi : \mathcal{G} \to \mathbb{R}$. Then the following limit holds almost surely
\begin{align*}
\lim_{t \to \infty} \frac{1}{|\Upsilon(t)|} \displaystyle\sum_{x \in \Upsilon(t)}\phi(\Upsilon(t)_{\downarrow x}) &= \lambda^{*} \int_{0}^{\infty} \exp\{-\lambda^{*} t\} \mathbf{E}(\phi(\Upsilon(t)))dt.
\end{align*}
\end{theorem}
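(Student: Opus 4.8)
The plan is to recognize Theorem~\ref{embedding} as a direct instance of the limit theorem for general (Crump--Mode--Jagers) branching processes due to Nerman, in the form quoted as Theorem~A of \cite{RudTotVal2007}, and then to verify that the pure birth reproduction process $\xi$ satisfies the hypotheses of that theorem. First I would set up the dictionary. Each vertex $x$ of $\Upsilon$ is an individual that reproduces according to an independent copy of the point process $\xi$, whose points are the birth times $\{T_n\}$. By the branching property, conditionally on the birth time $\sigma_x$ of $x$, the descendant subtree $(\Upsilon(t))_{\downarrow x}$ is an independent copy of $\Upsilon(t-\sigma_x)$; hence the summand $\phi((\Upsilon(t))_{\downarrow x})$ is a random characteristic evaluated at the age $a = t - \sigma_x$, with expected value $\mathbf{E}[\phi(\Upsilon(a))]$. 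The normalizing quantity $|\Upsilon(t)|$ is the counting characteristic $\psi \equiv 1$, so that the corresponding sum counts the individuals born by time $t$.

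Second, I would confirm that the Malthusian machinery applies. The expected Laplace transform $\hat{\rho}(\lambda) = \sum_{n\geq 1}\prod_{k=1}^n (1+k\lambda)^{-1}$ is finite for every $\lambda > 0$ (the general term decays faster than $(n!)^{-1}\lambda^{-n}$), continuous and strictly decreasing with $\hat{\rho}(0^+) = +\infty$ and $\hat{\rho}(\infty) = 0$; this gives the unique positive root $\lambda^*$ of $\hat{\rho}(\lambda^*)=1$ already recorded in the excerpt, and in particular $\hat{\rho}(\lambda) < \infty$ on a neighbourhood to the left of $\lambda^*$. Because the inter-birth times are absolutely continuous, the intensity measure $\mathbf{E}[\xi(dt)]$ is non-lattice, so the renewal-theoretic input behind Nerman's theorem is available.

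Third, I would check the two remaining regularity conditions. The characteristic side is immediate: since $\phi$ is bounded, the map $a \mapsto \mathbf{E}[\phi(\Upsilon(a))]$ is bounded and $\int_0^\infty e^{-\lambda^* a}\mathbf{E}[\phi(\Upsilon(a))]\,da$ converges. The reproduction side requires the $x\log x$-type moment condition guaranteeing that $|\Upsilon(t)| \sim W\,e^{\lambda^* t}$ with a strictly positive limit $W$; here one verifies $\mathbf{E}\bigl[\zeta \log^{+}\zeta\bigr] < \infty$ for $\zeta = \int_0^\infty e^{-\lambda^* t}\,\xi(dt) = \sum_{n\geq 1} e^{-\lambda^* T_n}$, which follows from the rapid (super-geometric) decay of $\mathbf{E}[e^{-\lambda^* T_n}] = \prod_{k=1}^n(1+k\lambda^*)^{-1}$.

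Finally, with all hypotheses in place, Nerman's theorem yields, almost surely,
\begin{align*}
\frac{\sum_{x\in\Upsilon(t)}\phi((\Upsilon(t))_{\downarrow x})}{\sum_{x\in\Upsilon(t)} 1} \;\longrightarrow\; \frac{\int_0^\infty e^{-\lambda^* a}\,\mathbf{E}[\phi(\Upsilon(a))]\,da}{\int_0^\infty e^{-\lambda^* a}\,da}.
\end{align*}
Since the denominator equals $1/\lambda^*$, the right-hand side is exactly $\lambda^*\int_0^\infty e^{-\lambda^* a}\,\mathbf{E}[\phi(\Upsilon(a))]\,da$, the claimed limit; the prefactor $\lambda^*$ is thus the reciprocal of the Laplace transform of the counting characteristic. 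I expect the main obstacle to be the verification of the $x\log x$ moment condition --- making precise that the reproduction point process is Malthusian and non-degenerate so that $|\Upsilon(t)|$ behaves like $W\,e^{\lambda^* t}$ --- since everything else is either already established in the excerpt or a routine boundedness estimate.
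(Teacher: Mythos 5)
Your proposal takes the same route as the paper, which states Theorem~\ref{embedding} as a direct consequence of Nerman's limit theorem for Crump--Mode--Jagers processes in the form of Theorem~A of \cite{RudTotVal2007}, with the characteristic $x \mapsto \phi(\Upsilon(t)_{\downarrow x})$ and counting characteristic $\psi \equiv 1$. The paper offers no verification of the hypotheses at all, so your additional checks (existence and uniqueness of $\lambda^*$ from monotonicity of $\hat{\rho}$, the non-lattice condition, boundedness of the characteristic, and the $x\log x$ moment condition via the super-geometric decay of $\mathbf{E}[e^{-\lambda^* T_n}]$) only make the argument more complete than what is written there.
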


\subsubsection{Athreya-Karlin Embedding}
\label{m half edges}
For $m > 1$, we will couple our graph process with 
a sequence of Yule processes (i.i.d pure birth processes), with appropriate birth rates, such that, 
our degree sequences at each vertex will have the same
distribution as the number of particles in the respective Yule processes sampled at suitable random times. 
Similar coupling has been used in \cite{Ath2007, AthGhoSeth2008} and also in \cite{BuDa90}, 
where it is termed as 
\emph{Rubin’s construction} in the context of reinforced random walks. The coupling with our specific birthrates,
namely, $\lambda_i = \sfrac{1}{i}$
has also appeared in a recent work of Thacker and Volkov \cite{ThVol2018}.

It is worth mentioning here that unlike in the previous case $m=1$, when the entire graph process can be embedded inside a branching tree, in this case we are not embedding the entire graph process into the random forest which can be obtained from the Yule processes. Instead, this coupling is essentially only for the degree sequences at each vertex. Thus, having \emph{self-loops} and \emph{multiple edges} in our
random graphs, which can occur since $m > 1$, do not create any contradiction to this coupling.


Let $\{Z(t): t \geq 0\}$ be a pure birth process with $Z(0) = m$ w.p. 1 and birth rates $\{ \lambda_i \}_{i=m}^{\infty}$, $\lambda_i = \frac{1}{i}$.  Let $\{Z_i (t): t \geq 0\}_{i=1}^{\infty}$ be i.i.d. copies of the pure birth process $Z(t)$. 

We will define a sequence of random times $\{ \tau_n \}_{n=1}^{\infty} $. Let $\tau_1 = 0$ w.p.1. We start the process $Z_1(t)$ at $t=0$. Let $T_1^{(2)}$ be the time after $\tau_1$ when the first birth takes place in $Z_1$. Let $T_2^{(2)}$ be the time after $\tau_1 + T_1^{(2)}$ when the second birth occurs in $Z_1$. We continue in this manner to get $T_1^{(2)}, \cdots, T_m^{(2)}$. Let $\tau_2$ be the time when the $m^{th}$ birth occurs in ${Z}_1(t-\tau_1)$.  We start a new process ${Z_2}(t)$ at $t= \tau_2$. We have, 
\begin{align*}
\tau_2- \tau_1 &= T_1^{(2)} + \cdots + T_m^{(2)} 
\end{align*}
In general, let $T_k^{(n+1)}$ denote the time of $k^{th}$ cumulative birth in $Z_1,\cdots,Z_n$ after $\tau_n+ T_1^{(n+1)}+\cdots+T_{k-1}^{(n+1)}$. Let $\tau_{n+1}$ be the time after $\tau_n$ when the $m^{th}$ birth takes place in the processes ${Z}_1,{Z}_2, \cdots, {Z}_n$ after $\tau_n$. We start the process $Z_{n+1}(t)$ at $t= \tau_{n+1}$.Therefore, we have,
$$\tau_{n+1} - \tau_n = T_1^{(n+1)} + \cdots + T_m^{(n+1)}$$
 We define, for $j = 1,\cdots,n$, 
\begin{align*}
\tilde{d}_j(n+1,k) &= {Z}_j (\tau_n + T_1^{(n+1)} + \cdots + T_k^{(n+1)} - \tau_i)
\end{align*}
We have the following embedding result. 
\begin{theorem}
\label{embedding_m}
The sequence $\{\tilde{d}_j(n+1,k), k= 0, \cdots, m-1,j=1,\cdots,n , n \geq 2 \}$ and $\{ d_j (n+1,k), k=0, \cdots, m-1, j=1, \cdots,n, n \geq 2 \} $ are identically distributed.
\end{theorem}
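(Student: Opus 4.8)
The plan is to establish the distributional identity by induction along the order in which half-edges are attached, the engine being the memorylessness of the exponential holding times together with the strong Markov property. Everything rests on one \emph{competing-clocks} fact: if $Z_1,\dots,Z_n$ are independent pure birth processes currently at levels $d_1,\dots,d_n$, where $Z_j$ waits an $\mathrm{Exp}(1/d)$ time (rate $1/d$, mean $d$) before jumping from level $d$ to $d+1$, then, conditionally on the current levels, the time to the next birth among the $n$ processes is exponential with rate $\sum_i 1/d_i$, the index of the process in which that birth occurs equals $j$ with probability $\frac{1/d_j}{\sum_i 1/d_i}$, and these two are independent. Comparing this last probability with the attachment rule $P(e_{n+1,k+1}=\{v_j,v_{n+1}\}\mid\mathscr{F}_{n+1,k}) = C_{n+1,k}/d_j(n+1,k)$ with $C_{n+1,k}^{-1}=\sum_j 1/d_j(n+1,k)$ shows that the discrete attachment probabilities are \emph{exactly} the competing-clocks probabilities. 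This is the continuous-time Rubin's construction referenced above.

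First I would fix the natural linear order on $\{(n,k): n\ge 2,\ 0\le k\le m-1\}$ that corresponds to attaching one half-edge at a time, and induct along it. The base case records that the embedded configuration read at $\tau_2$ reproduces the prescribed initial degrees of $G_2$. For the inductive step I would condition on the information generated by the processes up to the random time $s_{n,k}:=\tau_n + T_1^{(n+1)}+\dots+T_k^{(n+1)}$ and apply the strong Markov property there: conditionally, $Z_1,\dots,Z_n$ restart from their present levels $\tilde d_j(n+1,k)$, so the $(k+1)$-st birth after $\tau_n$ lands in process $j$ with probability $\frac{1/\tilde d_j(n+1,k)}{\sum_i 1/\tilde d_i(n+1,k)}$, raising that one coordinate by one and fixing the rest. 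By the competing-clocks identity this transition kernel coincides with the one governing $d_j(n+1,k)\mapsto d_j(n+1,k+1)$ in the discrete model, closing the induction over $k$.

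The remaining transition is the vertex-addition step. Here I would use that $\tau_{n+1}$ is by definition the time of the $m$-th birth after $\tau_n$ among $Z_1,\dots,Z_n$, so precisely the $m$ half-edges of $v_{n+1}$ are accounted for, and that a fresh process $Z_{n+1}$, independent of the past and started at level $m$, is switched on at $\tau_{n+1}$; this matches the introduction of $v_{n+1}$ with degree $m$ and the fact that only the earlier vertices' degrees are updated while $v_{n+1}$'s half-edges are attached. Combining the two kinds of transitions with the induction hypothesis yields equality of the full joint laws.

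I expect the main obstacle to be organizational rather than conceptual: one must verify scrupulously that the filtration generated by the embedding, read at the random times $s_{n,k}$, carries exactly the information in $\mathscr{F}_{n+1,k}$ and nothing more, and that $Z_{n+1}$ is genuinely independent of all that is observed before $\tau_{n+1}$, so that switching it on does not corrupt the competing-clocks computation at later steps. Checking that the times $\{s_{n,k}\}$ are a.s.\ finite and strictly increasing, and that the strong Markov property legitimately applies at each of them, is routine but is exactly where the care is required.
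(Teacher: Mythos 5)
Your argument is correct, and it is exactly the standard competing-exponential-clocks justification of Rubin's construction: the identity $\min_j$ of independent $\mathrm{Exp}(1/d_j)$ clocks selecting index $j$ with probability $(1/d_j)/\sum_i(1/d_i)$ matches the inverse de-preferential attachment kernel $C_{n+1,k}/d_j(n+1,k)$ term for term, and the strong Markov property (the processes are non-explosive since $\sum_i i = \infty$) closes the induction over half-edges and over the vertex-addition step. The paper in fact supplies no proof of Theorem~\ref{embedding_m} at all, treating it as a known consequence of the Athreya--Karlin embedding, so your write-up is the canonical argument the authors are implicitly invoking; your only over-caution is in worrying that the continuous-time filtration at $s_{n,k}$ might carry \emph{more} information than $\mathscr{F}_{n+1,k}$ --- that is harmless, since the Markov property makes the conditional law of the next birth depend only on the current levels regardless of the extra information.
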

It is important to emphasize at this point that the embedding of the random graph process in the $m=1$ case into a CMJ branching process induces an Athreya-Karlin Embedding of the random graph process in the same probability space. We will be utilizing both these embeddings for the $m=1$ case to establish certain properties of these random graphs. 

We note that the pure birth process $Z(t)$ considered in Section \ref{m half edges} reduces to the birth process $\xi(t)$ considered in Section \ref{onehalfedge} if we fix $m=1$. We will need the following results about the asymptotic properties of these pure birth processes. 

\begin{theorem}
\label{purebirth_rate}
Let $\{ Z(t) :  t \geq 0 \} $ be a pure birth process with $Z(0) = m$ w.p. 1 and birth rates $\{ \lambda_i \}_{ i=m}^{\infty}$, $\lambda_i = \frac{1}{i}$.  Then $\frac{Z(t)}{\sqrt{t}}  {\buildrel P \over \rightarrow}  \sqrt{2}$ as $t \to \infty$. 
\end{theorem}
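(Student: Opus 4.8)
The plan is to analyze the birth times of the pure birth process directly. Write $T_m = 0$ and let $T_n$ (for $n > m$) be the time of the $n$-th birth, so that $Z(t) = n$ for $t \in [T_n, T_{n+1})$. By the defining rates, the holding times $W_n := T_{n+1} - T_n$ are independent with $W_n \sim \mathrm{Exp}(1/n)$, i.e.\ $E[W_n] = n$ and $\mathrm{Var}(W_n) = n^2$. Hence $T_n = \sum_{j=m}^{n-1} W_j$ is a sum of independent exponentials with $E[T_n] = \sum_{j=m}^{n-1} j = \tfrac{1}{2}n^2 (1 + o(1))$ and $\mathrm{Var}(T_n) = \sum_{j=m}^{n-1} j^2 = \tfrac{1}{3} n^3 (1 + o(1))$. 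The first step is therefore to show $T_n / E[T_n] \to 1$ in probability, which follows immediately from Chebyshev's inequality since $\mathrm{Var}(T_n) / E[T_n]^2 \asymp n^{-1} \to 0$. Consequently $T_n / (n^2/2) \xrightarrow{P} 1$, equivalently $n / \sqrt{T_n} \xrightarrow{P} \sqrt{2}$.

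The second step is to transfer this statement about the birth times back to the process $Z(t)$ observed at deterministic times $t$. Fix $\eps > 0$. Since $Z(t) \geq n \iff T_n \leq t$, we have, for any $t$,
\begin{align*}
P\left( \tfrac{Z(t)}{\sqrt t} > \sqrt 2 + \eps \right) &\leq P\left( T_{\lceil (\sqrt 2 + \eps)\sqrt t \rceil} \leq t \right), \\
P\left( \tfrac{Z(t)}{\sqrt t} < \sqrt 2 - \eps \right) &\leq P\left( T_{\lfloor (\sqrt 2 - \eps)\sqrt t \rfloor} > t \right).
\end{align*}
Setting $n = n(t) \sim c\sqrt t$ with $c = \sqrt 2 \pm \eps$, we have $E[T_{n(t)}] \sim \tfrac{1}{2} c^2 t$, which is $> t$ when $c = \sqrt 2 + \eps$ and $< t$ when $c = \sqrt 2 - \eps$ (for $\eps$ small, using $\tfrac12(\sqrt2 \pm \eps)^2 = 1 \pm \sqrt2\,\eps + o(\eps)$). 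In each case the event on the right-hand side is a deviation of $T_{n(t)}$ from its mean by a fixed fraction, so by Chebyshev it has probability $O(1/n(t)) = O(t^{-1/2}) \to 0$. This gives $Z(t)/\sqrt t \xrightarrow{P} \sqrt 2$.

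The only mild subtlety — and the place to be a little careful — is the handling of the floor/ceiling and the passage from $E[T_{n(t)}]$ to the comparison with $t$: one must check that for small enough $\eps$ the strict inequalities $\tfrac12(\sqrt2+\eps)^2 > 1$ and $\tfrac12(\sqrt2-\eps)^2 < 1$ hold with room to spare, so that $|T_{n(t)} - E[T_{n(t)}]|$ exceeding $|E[T_{n(t)}] - t| \asymp t$ is indeed a deviation of order a constant times the mean, which Chebyshev controls. There is no real obstacle here; the argument is entirely elementary once the exponential holding-time structure of $Z(t)$ is made explicit. (Note this also recovers, as the $m$-independence of the limit suggests, that the finite initial value $m$ and the finitely many initial holding times $W_m, \dots$ contribute only lower-order terms to $E[T_n]$ and $\mathrm{Var}(T_n)$.)
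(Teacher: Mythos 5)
Your proof is correct for the theorem as stated, and it starts from the same observation as the paper: the holding times between births are independent exponentials with means $m, m+1, \dots$, so $E[T_n]\sim n^2/2$. Where you diverge is in how you upgrade this to a statement about $Z(t)$. You control $T_n/E[T_n]$ only in probability via Chebyshev, and then invert through the duality $\{Z(t)\ge n\}=\{T_n\le t\}$ with explicit $\eps$-bounds; this is clean and entirely sufficient for convergence in probability (the floor/ceiling bookkeeping you flag is harmless, though note that for the lower deviation the exact dual event is $\{T_{\lfloor(\sqrt2-\eps)\sqrt t\rfloor+1}>t\}$, an off-by-one that changes nothing asymptotically). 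The paper instead shows $\sum_k \Var(L_k/k^2)<\infty$, deduces a.s. convergence of $\sum_k (L_k-E[L_k])/k^2$, applies Kronecker's lemma to get $T_n/n^2\to 1/2$ \emph{almost surely}, and then transfers to $Z(t)$ by the deterministic sandwich $T_{Z(t)}\le t< T_{Z(t)+1}$, obtaining $Z(t)/\sqrt t\to\sqrt2$ a.s. This extra strength is not cosmetic: in the proofs of Theorems \ref{ModelB_exact} and \ref{ModelB_m} the process $Z_i$ is evaluated along the random times $\tau_n-\tau_i$, and the pathwise statement is what allows one to substitute a random, a.s.-divergent time sequence; your in-probability version at deterministic times would not plug into those arguments without additional work. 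So your argument proves exactly what the theorem asserts, but if you want it to serve the same role in the rest of the paper you should strengthen the first step to almost sure convergence (your variance computation already gives $\Var(T_n)=\Theta(n^3)$, so the Kolmogorov--Kronecker route, or a Borel--Cantelli argument along a subsequence plus monotonicity of $T_n$, closes the gap).
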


\begin{proof}
Let 
\begin{align*}
T_1 &= 0 \\
T_2 &= \inf\{t\geq T_1: Z(t) = m+1\} \\
T_3 &= \inf\{t \geq T_2: Z(t)= m+2\} \\
&\vdots
\end{align*}
Then we know, $T_1, T_2 - T_1, \cdots, T_n- T_{n-1}, \cdots$ are independent exponential random variables. Therefore, $L_n= T_{n+1}- T_n$ are independent and $L_n \sim Exp(\frac{1}{m+n-1})$.Then we have
\begin{align*}
T_n&= L_1 + \cdots + L_{n-1} \\
\implies E(T_n) &= m + (m+1) + \cdots + (m+ n-2) \\ 
\implies E(T_n) &= (n-1)m + \frac{(n-1)(n-2)}{2} 
\end{align*}
Also, $\Var(L_n)=(m+n-1)^2$. Hence we have,
\begin{align*}
\sum_{k=1}^{\infty} \Var\left(\frac{L_k}{k^2}\right) &= \sum_{k=1}^{\infty} \frac{(m+k-1)^2} {k^4} < \infty.
\end{align*}
Thus, $\sum_{k=1}^{\infty} \frac{L_k - E[L_k]}{k^2} <\infty$ w.p. 1. 
First, this implies that $\frac{L_k}{k^2}\to  0$ a.s., because $E(L_k)=(m+k-1)$. Further,
an application of Kronecker's Lemma yields that $\frac{T_n - E[T_n]}{n^2} \to 0$ w.p. 1. This allows us to conclude that $\frac{T_n}{n^2} \to \frac{1}{2}$ a.s. 

We observe that $Z(t) \uparrow \infty$ as $t\to \infty$ and $T_{Z(t)} \leq t < T_{Z(t)+1}$. Therefore,
\begin{align*}
\frac{T_{Z(t)}}{(Z(t))^2} \leq &\frac{t}{(Z(t))^2} < \frac{T_{Z(t)}}{(Z(t))^2} + \frac{L_{Z(t)}}{(Z(t))^2}\\
\implies \frac{Z(t)}{\sqrt{t}} &\to \sqrt{2} \quad a.s.
\end{align*}
This completes the proof. 
\end{proof}

\subsection{Technical Results on Normalizing Constant}
The results established below allow us to approximate the normalizing constants for the 
de-preferential attachment model. 

Let $\tilde{\mathscr{F}}_{n,k}$ denote the natural filtration associated with the continuous time embedding $\tilde{d}_j(n+1,k)$ described in section \ref{m half edges}. We define 
$$ \tilde{C}_{n+1, k}^{-1} = \tilde{D}_{n+1,k} = \sum_{j=1}^{n} \frac{1}{\tilde{d}_j(n+1,k)}.$$
The natural filtration $\{\tilde{\mathscr{F}}_n\}$ and the constants $\tilde{C}_n$ and $\tilde{D}_n$ are defined analogously for the $m=1$ case. 

\begin{lemma}
\label{ModelB_bound}
For $m=1$, $\frac{\tilde{D}_n}{n} \to \lambda^*$ a.s.
\end{lemma}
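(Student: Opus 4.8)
The plan is to exploit the CMJ embedding from Section \ref{onehalfedge}. Recall that $G_n$ has the same law as $\Upsilon(\tau_n)$, so $\tilde D_n = \sum_{x \in \Upsilon(\tau_n)} 1/\deg_{\Upsilon(\tau_n)}(x)$, where the degree of $x$ is the number of offspring it has produced so far, that is, $\xi_x(\text{age of } x)$ for the associated pure birth process. The idea is to apply Theorem \ref{embedding} with the functional $\phi(\mathscr{T}) = 1/|\text{root children count of }\mathscr{T}|$ — more precisely $\phi(\mathscr{T}_{\downarrow x})$ reads off the degree of $x$ within the subtree rooted at $x$, which for a pure-birth reproduction is exactly the current value of $x$'s birth process. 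Thus $\frac{1}{|\Upsilon(t)|}\sum_{x\in\Upsilon(t)}\phi(\Upsilon(t)_{\downarrow x}) \to \lambda^*\int_0^\infty e^{-\lambda^* t}\,\E[1/\xi(t)]\,dt$ almost surely.

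The next step is to evaluate that integral and show it equals $\lambda^*$, which (after dividing by $\lambda^*$) would give $\frac{1}{|\Upsilon(t)|}\tilde D(t) \to 1$... wait, that is not quite the target: we want $\tilde D_n / n \to \lambda^*$, i.e. the limit should be $\lambda^*$, not $1$. So I would instead argue directly: writing $q_k(t) = \P(\xi(t)=k)$, one has $\E[1/\xi(t)] = \sum_k q_k(t)/k$, and the relevant quantity is $\lambda^*\int_0^\infty e^{-\lambda^* t}\E[1/\xi(t)]\,dt = \lambda^*\sum_k \frac1k\int_0^\infty e^{-\lambda^* t} q_k(t)\,dt$. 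Using that $T_k - T_{k-1}\sim \mathrm{Exp}(1/k)$ are independent, $\int_0^\infty e^{-\lambda^* t}q_k(t)\,dt$ can be computed via the Laplace transform of the sum $T_1+\cdots+T_{k-1}$ together with the residual lifetime, yielding a telescoping-type expression $\frac{1}{\lambda^*}\prod_{i=0}^{k-1}\frac{1}{(i+1)\lambda^*+1}$ times a correction. Summing over $k$ and invoking the defining relation $\hat\rho(\lambda^*) = \sum_{k\ge1}\prod_{i=0}^{k-1}\frac{1}{(i+1)\lambda^*+1} = 1$ should collapse everything and produce exactly the constant $\lambda^*$; the bookkeeping is where I would be most careful, since the functional form of $\phi$ and the precise normalization ($|\Upsilon(t)|$ vs.\ $n$, and the conversion $\tau_n \leftrightarrow n$) all have to be matched. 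Because $\phi$ here is bounded (degrees are $\geq 1$, so $1/\deg \leq 1$), Theorem \ref{embedding} applies directly with no truncation needed.

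Finally, I would convert the continuous-time statement to the discrete-time one: since $|\Upsilon(\tau_n)| = n$ by construction, the almost sure limit along the subsequence $t=\tau_n$ gives $\tilde D_n/n = \tilde D(\tau_n)/|\Upsilon(\tau_n)| \to \lambda^*$ a.s., which is exactly the claim. The main obstacle I anticipate is not conceptual but computational: correctly identifying which functional of the subtree recovers the \emph{degree} (an out-degree-plus-one type count, since the root of each subtree also carries an edge to its parent for non-root vertices, so one must check whether the graph-degree is $\xi(\text{age})$ or $\xi(\text{age})+1$ and handle the root separately) and then evaluating $\int_0^\infty e^{-\lambda^* t}\E[1/\xi(t)]\,dt$ in closed form and recognizing it as $1$ after using $\hat\rho(\lambda^*)=1$. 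A secondary technical point is confirming that Nerman's theorem (Theorem \ref{embedding}) is being invoked under hypotheses that hold for this supercritical Malthusian process — but that has effectively been asserted already in the surrounding text, so I would cite it as given.
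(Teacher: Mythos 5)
Your proposal is correct and takes essentially the same route as the paper: the paper's (one-line) proof applies Theorem~\ref{embedding} with the bounded functional $\phi(G)=\frac{1}{1+\#\mathrm{children}(\emptyset)}$, which is exactly $1/\xi(\text{age of root})$ since the graph degree equals $1+\#\text{children}$ (resolving your off-by-one worry in the way you suspected), and the resulting limit $\lambda^{*}\int_{0}^{\infty}e^{-\lambda^{*}t}\,\E\left[1/\xi(t)\right]dt$ telescopes to $\lambda^{*}$ via $\hat{\rho}(\lambda^{*})=1$, just as you outline. The explicit evaluation you sketch is the content the paper leaves implicit, and it checks out.
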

\begin{proof}
The proof follows from Theorem A, \cite{RudTotVal2007} by using the bounded functional 
$\phi(G)= \frac{1}{1+ \#\text{children}(\emptyset)}$, where $\emptyset$ is the root of a finite tree $G \in \GG$.
\end{proof}

For $m>1$, we have the following bounds on the normalizing constants. 
\begin{lemma}
\label{ModelB_m_bound}
$\forall n \geq 2, \forall k=0,1,\cdots, m-1$, $ \frac{m}{n-1} \leq C_{n,k} \leq \frac{2m}{n-1}$ wp 1.
\end{lemma}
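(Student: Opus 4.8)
The plan is to recast the desired two-sided bound on $C_{n,k}$ as the equivalent bound $\frac{n-1}{2m} \leq D_{n,k} \leq \frac{n-1}{m}$ on the reciprocal $D_{n,k} = C_{n,k}^{-1} = \sum_{j=1}^{n-1} \frac{1}{d_j(n,k)}$, and then to extract the two sides from two structural facts about the degree sequence: a pointwise lower bound on each degree, and a global upper bound on the total degree.

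For the lower bound $C_{n,k} \geq \frac{m}{n-1}$ (equivalently $D_{n,k} \leq \frac{n-1}{m}$), I would use the elementary observation that every vertex has degree at least $m$: a vertex is born with $m$ half-edges that are immediately attached, contributing $m$ to its own out-degree, and all later attachments only increase its degree. Hence $\frac{1}{d_j(n,k)} \leq \frac{1}{m}$ for each $j$, and summing the $n-1$ terms gives $D_{n,k} \leq \frac{n-1}{m}$.

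For the upper bound $C_{n,k} \leq \frac{2m}{n-1}$ (equivalently $D_{n,k} \geq \frac{n-1}{2m}$), the idea is to combine a global degree count with the Cauchy--Schwarz inequality. First I would record the total degree of the existing vertices: the graph $G_{n-1}$ carries $m(n-2)$ edges (the $m$ seed edges plus $m$ for each later vertex), so its vertices have total degree $2m(n-2)$; the $k \leq m-1$ half-edges of $v_n$ already placed contribute a further $k$, and the $m$ free half-edges of the seed configuration contribute at most $m$, so that $\sum_{j=1}^{n-1} d_j(n,k) \leq 2m(n-1)$ with room to spare. Cauchy--Schwarz applied to the positive reals $d_j(n,k)$ then gives $\left(\sum_{j=1}^{n-1} \frac{1}{d_j(n,k)}\right)\left(\sum_{j=1}^{n-1} d_j(n,k)\right) \geq (n-1)^2$, whence $D_{n,k} \geq \frac{(n-1)^2}{2m(n-1)} = \frac{n-1}{2m}$.

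Combining the two displayed bounds on $D_{n,k}$ yields the claim. Neither step is deep: the minimum-degree fact is immediate from the construction, and the Cauchy--Schwarz estimate is standard. The only point requiring genuine care --- and the one I would treat as the main obstacle --- is the exact total-degree accounting: one must track how the seed graph's $m$ free half-edges and the $k$ partially attached half-edges of the incoming vertex enter the count, so that the bound $\sum_j d_j(n,k) \leq 2m(n-1)$ (and hence the constant $2$, not something larger) holds for \emph{every} $n$ and $k$, including the small-$n$ base cases. This is a finite bookkeeping check rather than a substantial difficulty, and the slack in $k \leq m-1 < 2m$ is exactly what makes it go through.
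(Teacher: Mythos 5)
Your proposal is correct and takes essentially the same route as the paper: the paper also bounds $D_{n,k} \leq \frac{n-1}{m}$ from the minimum degree $d_j(n,k) \geq m$, and gets $D_{n,k} \geq \frac{(n-1)^2}{k+m(2n-3)} \geq \frac{n-1}{2m}$ from the exact total-degree count $\sum_j d_j(n,k) = k + m(2n-3)$ via the A.M.--H.M. inequality, which is the same estimate as your Cauchy--Schwarz step. Your bookkeeping $2m(n-2) + m + k = k + m(2n-3) \leq 2m(n-1)$ reproduces the paper's count exactly, so nothing is missing.
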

\begin{proof}
We note that $C_{n,k}^{-1}=D_{n,k}= \displaystyle\sum_{j=1}^{n-1} \frac{1}{d_j(n,k)}$. We observe that $d_j(n,k) \geq m$ for $j=1, \cdots, n-1$ and therefore $D_{n,k} \leq \frac{n-1}{m}$. We also observe that $\displaystyle\sum_{j=1}^{n-1} d_j(n,k) = k + m(2n-3) $. Therefore, by the A.M.-H.M. inequality,
\begin{align*}
\displaystyle\sum_{j=1}^{n-1} \frac{1}{d_j(n,k)} &\geq \frac{(n-1)^2}{k+ m(2n-3)} \\
&\geq \frac{n-1}{2m}
\end{align*}
Combining, we get that $\frac{n-1}{2m} \leq D_{n,k} \leq \frac{n-1}{m}$. Therefore, we have, $\frac{m}{n-1} \leq C_{n,k} \leq \frac{2m}{n-1}$ w.p. 1. 
\end{proof}
We use Theorem \ref{embedding_m} to conclude that $\tilde{D}_{n+1,k}$ and $D_{n+1,k}$ are identically distributed. Therefore, using the previous lemma, we have,
\begin{equation}
\label{embedding_bound}
 \frac{m}{n} \leq \tilde{C}_{n+1,k} \leq \frac{2m}{n}
\end{equation}

\begin{proposition}
\label{modelB_m_rate}
$\forall i \geq 1$, $\exists$ a random sequence $\{c_n\} \sim \Theta( m^2 \log n )$ such that 
$$\frac{\tau_n - \tau_ i}{ c_n} \to 1 \quad a.s.$$ as $n \to \infty$.  
\end{proposition}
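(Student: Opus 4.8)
The plan is to write $\tau_n-\tau_i$ as a telescoping sum of the super-step increments $X_l:=\tau_{l+1}-\tau_l$ and to exploit the two-sided control on the normalizing constants in (\ref{embedding_bound}). Throughout, let $\Hi_l$ denote the $\sigma$-field generated by the embedding up to the random time $\tau_l$, so that $X_l$ is $\Hi_{l+1}$-measurable. Recall that on $[\tau_l,\tau_{l+1})$ there are exactly $l$ active processes and $X_l=\sum_{k=0}^{m-1}T^{(l+1)}_{k+1}$, where, conditionally on the state just before the $(k+1)$-st cumulative birth, $T^{(l+1)}_{k+1}$ is exponential with rate $\tilde{D}_{l+1,k}$, hence of conditional mean $\tilde{C}_{l+1,k}$ and conditional variance $\tilde{C}_{l+1,k}^2$.

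First I would set $\mu_l:=E[X_l\mid\Hi_l]$ and define the (random) normalizing sequence $c_n:=\sum_{l=i}^{n-1}\mu_l$. By the tower property and the deterministic bound $\tfrac{m}{l}\le\tilde{C}_{l+1,k}\le\tfrac{2m}{l}$ from (\ref{embedding_bound}), valid for \emph{every} $k$, summing the $m$ conditional exponential means gives
\[
\frac{m^2}{l}\ \le\ \mu_l\ \le\ \frac{2m^2}{l}.
\]
Comparing $\sum_{l=i}^{n-1}\mu_l$ with the harmonic series then shows $c_n=\Theta(m^2\log n)$ surely, and in particular $c_n\to\infty$. This pins down the required normalizing sequence.

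Next I would prove that the centered sums $M_n:=\sum_{l=i}^{n-1}(X_l-\mu_l)$ form an $L^2$-bounded martingale for $\{\Hi_n\}$. The martingale property is immediate from the definition of $\mu_l$, and orthogonality of the increments gives $E[M_n^2]=\sum_{l=i}^{n-1}E[\Var(X_l\mid\Hi_l)]$. To bound each conditional variance I would apply the law of total variance, conditioning further on the $\sigma$-field $\Gi_l\supseteq\Hi_l$ recording the sequence of intermediate configurations during $[\tau_l,\tau_{l+1})$ (equivalently, the rates $\tilde{D}_{l+1,0},\dots,\tilde{D}_{l+1,m-1}$); given $\Gi_l$ the increment $X_l$ is a sum of $m$ independent exponentials, so $\Var(X_l\mid\Gi_l)=\sum_{k}\tilde{C}_{l+1,k}^2$ and $E[X_l\mid\Gi_l]=\sum_{k}\tilde{C}_{l+1,k}$. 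Since each $\tilde{C}_{l+1,k}$ lies in an interval of length $m/l$ by (\ref{embedding_bound}), both $E[\Var(X_l\mid\Gi_l)\mid\Hi_l]$ and $\Var(E[X_l\mid\Gi_l]\mid\Hi_l)$ are $O(\mathrm{poly}(m)/l^2)$, whence $\Var(X_l\mid\Hi_l)=O(1/l^2)$ for fixed $m$. Therefore $E[M_n^2]\le\sum_{l\ge i}O(1/l^2)<\infty$, and the martingale convergence theorem yields $M_n\to M_\infty$ a.s.\ with $M_\infty$ finite.

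Combining the pieces, $\tau_n-\tau_i=M_n+c_n$, so
\[
\frac{\tau_n-\tau_i}{c_n}=1+\frac{M_n}{c_n}\longrightarrow 1\qquad\text{a.s.},
\]
because $M_n$ has a finite almost-sure limit while $c_n\to\infty$. The step I expect to be the main obstacle is the conditional-variance bound: within a single super-step the total rate $\tilde{D}_{l+1,k}$ changes after each of the $m$ births and the intermediate rates are themselves random, so $X_l$ cannot be treated as a single exponential. The remedy is precisely that the uniform two-sided estimate (\ref{embedding_bound}) (equivalently Lemma~\ref{ModelB_m_bound}) holds for every intermediate $k$, which controls the conditional means and variances simultaneously; once that is in hand the remainder is a routine $L^2$-martingale computation.
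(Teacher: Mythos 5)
Your proposal is correct and follows essentially the same route as the paper: decompose $\tau_n-\tau_i$ into super-step increments, take $c_n$ to be the cumulative sum of the conditional means (bounded above and below via Lemma~\ref{ModelB_m_bound}), and show the centered sum is an $L^2$-bounded martingale so that the martingale convergence theorem finishes the argument. If anything, your treatment of the conditional variance via the law of total variance is more careful than the paper's, which passes from $\sum b_n^2<\infty$ to $L^2$-boundedness without spelling out the bound on the conditional second moments of the increments.
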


\begin{proof}
We observe that the random variables $T_1^{(n+1)}, \cdots, T_m^{(n+1)}$ are independent and that 
\begin{align*}
T_1^{(n+1)} | \tilde{\mathscr{F}}_{n+1,0} &\sim Exp(\tilde{D}_{n+1,0}) \\
T_2^{(n+1)} | \tilde{\mathscr{F}}_{n+1,1} &\sim Exp(\tilde{D}_{n+1,1}) \\
&\vdots\\
T_m^{(n+1)} | \tilde{\mathscr{F}}_{n+1,m-1} &\sim Exp(\tilde{D}_{n+1,m-1}) 
\end{align*} 
We define 
\begin{align*}
b_n &= \tilde{C}_{n+1,0} +E[\tilde{C}_{n+1, 1}| \tilde{\mathscr{F}}_{n+1,0}]+ \cdots + E[\tilde{C}_{n+1, m-1}| \tilde{\mathscr{F}}_{n+1,0}]
\end{align*}
Then, using equation \eqref{embedding_bound}, we may conclude that,
\begin{align}
\label{Equ:bound-on-b-n}
\frac{m^2}{n} \leq &b_n \leq \frac{2m^2}{n}
\end{align}
Now, recall that 
$\tau_{n+1} - \tau_n = \displaystyle{\mathop{\sum}\limits_{k=1}^{m} T_k^{(n+1)}}$. Thus,
$\{ \tau_{n+1} - \tau_n - b_n , \tilde{\mathscr{F}}_{n+1,0} \}$ forms a martingale difference sequence. Further, we have, $\sum b_n^2 < \infty$, which implies that $Y_n = \tau_n - \tau_i - (b_i +\cdots+ b_n)$ is an $L^2$ bounded martingale. Therefore, by the Martingale Convergence Theorem, we have, $Y_n \to Y$ a.s. as $n\to \infty$. We define,
\begin{align*}
c_n &= b_i + \cdots + b_n
\end{align*}
Therefore,
\begin{align*}
\tau_n - \tau_i - c_n & {\buildrel a.s. \over \rightarrow} Y\\
\implies \frac{\tau_n - \tau_i}{c_n} & {\buildrel a.s. \over \rightarrow} 1 
\end{align*}
Again, using equation 
\eqref{Equ:bound-on-b-n}, it easily follows that $c_n \sim \Theta(m^2 \log n)$. This completes the proof. 
\end{proof}

\begin{proposition}
\label{Model_exact1}
For $m=1$, the sequence $\{c_n\}$ outlined in Proposition \ref{modelB_m_rate} satisfies $\frac{c_n}{\log n} \to \frac{1}{\lambda^*}$ as $n \to \infty$. 
\end{proposition}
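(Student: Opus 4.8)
The plan is to reduce the claim to the growth of partial sums of the normalizing constants $\tilde{C}_n$ and then feed in Lemma~\ref{ModelB_bound}. First I would specialize the definitions of $b_n$ and $c_n$ from Proposition~\ref{modelB_m_rate} to the case $m=1$. There the correction sum $\sum_{k=1}^{m-1} E[\tilde{C}_{n+1,k}\mid \tilde{\mathscr{F}}_{n+1,0}]$ is empty, so $b_n$ collapses to the single term $\tilde{C}_{n+1,0}=\tilde{C}_{n+1}$, and hence
\[
c_n \;=\; \sum_{j=i}^{n} b_j \;=\; \sum_{j=i}^{n} \tilde{C}_{j+1} \;=\; \sum_{l=i+1}^{n+1} \tilde{C}_l .
\]
In this way the entire statement is recast as an estimate for the Ces\`aro-type partial sum $\sum_{l} \tilde{C}_l$.

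Next I would invoke Lemma~\ref{ModelB_bound}, which gives $\tilde{D}_n / n \to \lambda^*$ almost surely. Since $\tilde{C}_n = \tilde{D}_n^{-1}$, this is exactly $a_l := l\,\tilde{C}_l = l/\tilde{D}_l \to 1/\lambda^*$ almost surely. Writing $\tilde{C}_l = a_l / l$ then displays $c_n$ as a weighted sum
\[
c_n \;=\; \sum_{l=i+1}^{n+1} \frac{a_l}{l},
\]
with weights $w_l = 1/l$ applied to a sequence $(a_l)$ that converges almost surely to $1/\lambda^*$.

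The last step is a deterministic Ces\`aro/Toeplitz argument performed pathwise on the probability-one event on which $a_l \to 1/\lambda^*$. Because the weights satisfy $\sum_l 1/l = \infty$ and $\sum_{l=i+1}^{n+1} 1/l = \log n + O(1)$, the weighted-average form of Toeplitz's lemma yields
\[
\frac{\sum_{l=i+1}^{n+1} a_l/l}{\sum_{l=i+1}^{n+1} 1/l} \longrightarrow \frac{1}{\lambda^*},
\]
and multiplying through by $\big(\sum_{l=i+1}^{n+1} 1/l\big)/\log n \to 1$ gives $c_n/\log n \to 1/\lambda^*$ almost surely. If I wanted this self-contained rather than quoting Toeplitz, I would fix $\epsilon>0$, choose $N$ so that $|a_l - 1/\lambda^*| < \epsilon$ for all $l \geq N$, split the sum at $N$ (the head over $l<N$ is a fixed constant, hence negligible after dividing by $\log n$), and sandwich the tail between $(\tfrac{1}{\lambda^*}\pm\epsilon)\sum_{l=N}^{n+1} 1/l$ before letting $n\to\infty$ and then $\epsilon\to 0$.

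I do not expect a genuine obstacle: once Lemma~\ref{ModelB_bound} is available the result is essentially a comparison of $\sum_l \tilde{C}_l$ with the harmonic sum. The only point needing mild care is the transfer from the almost-sure convergence of $a_l$ to that of the Ces\`aro sum, which is why I work on the full-measure event and apply the deterministic Toeplitz lemma there rather than manipulating the random sequence directly.
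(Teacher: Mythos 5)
Your proposal is correct and follows essentially the same route as the paper: the paper's (very terse) proof likewise specializes $c_n=\sum_j b_j$ to $b_j=\tilde{C}_{j+1}$ for $m=1$, invokes Lemma~\ref{ModelB_bound} in the form $n\tilde{C}_n\to 1/\lambda^*$ a.s., and concludes from ``the form of the sequence $c_n$'' --- which is precisely the pathwise Toeplitz/harmonic-sum comparison you spell out. Your write-up just makes explicit the details the paper leaves to the reader.
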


\begin{proof}
From Lemma \ref{ModelB_bound}, we have $n\tilde{C}_n\to \frac{1}{\lambda^*}$ a.s. as $n \to \infty$. This observation, along with the
form of the sequence $c_n$, help us to conclude the result sought. 
\end{proof}

\subsection{Proof of Theorem \ref{ModelB_exact}}
From Theorem \ref{embedding_m} we have, for $j=1, \cdots, n$, $d_j(n)$ is distributed identically as $Z_j(\tau_n - \tau_j)$. Also, combining Theorem \ref{purebirth_rate}  and Proposition \ref{modelB_m_rate} ,we have,
\begin{align*}
\frac{Z_i(\tau_n- \tau_i)}{\sqrt{c_n}} \to  \sqrt{2} \quad a.s.\\
\implies \frac{d_i(n)}{\sqrt{c_n}}  \to \sqrt{2} \quad a.s.
\end{align*}
Finally, we note from Proposition \ref{Model_exact1}, that $\frac{c_n}{\log n} \to \frac{1}{\lambda^*}$ a.s. This helps us to conclude that the result in consideration. 

\subsection{Proof of Theorem \ref{ModelB_degree} }
We use Theorem A, \cite{RudTotVal2007} with $\phi(G) = 1( \#children(\emptyset, G) =k)$ where $\emptyset$ denotes the root of the tree $G$. Then we have,
\begin{align*}
&\displaystyle\lim_{t \to \infty} \frac{ | \{x\in \Upsilon(t) : deg(x,\Upsilon(t)) = k+1\}|}{|\Upsilon(t)|} \\
&= \lambda^* \displaystyle\int_{0}^{\infty} \exp(-\lambda^*t) P( \# children(\emptyset, \Upsilon(t))=k) dt 
\end{align*}
Now,
\begin{align*}
& P( \# children(\emptyset, \Upsilon(t))=k) = P(T_k <t) - P(T_{k+1} <t)\\
\end{align*}
By Fubini's Theorem, we have, 
\begin{align*}
\lambda^* \displaystyle\int_{0}^{\infty} \exp(-\lambda^*t) P(T_k <t) dt &= E(e^{-\lambda^* T_k})
\end{align*}
$T_k$ is the sum of independent exponentially distributed random variables with parameters $1,\frac{1}{2}, \frac{1}{3},\cdots, \frac{1}{k}$, this can be easily calculated. 
This completes the proof. 

\subsection{Proof of Corollary} 
\ref{ModelB_character}
We first define 
\begin{align*}
\tilde{p}_k &= \frac{k\lambda^*}{ k\lambda^* + 1} \displaystyle\prod_{i=1}^{k-1} \frac{1}{i\lambda^* + 1}, \quad k=1,2, \cdots
\end{align*}
\begin{itemize}
\item[(i)] We observe that $\lambda^*>1$ and note that $\frac{\tilde{p}_{k+1}}{\tilde{p}_k}\leq 1$ $\forall k\geq 1$ . This proves that the mode of the distribution is at $1$.
\item[(ii)] We observe that 
\begin{align*}
\displaystyle\sum_{k=n}^{\infty} \tilde{p}(k) &= \prod_{i=1}^{n-1}\frac{1}{1+i\lambda^*}\\
\implies \displaystyle\sum_{n=1}^{\infty}\displaystyle\sum_{k=n}^{\infty} \tilde{p}(k) &= 2. 
\end{align*}
\item[(iii)] 
\begin{align*}
\displaystyle\sum_{k=n}^{\infty} \tilde{p}(k) &= \prod_{i=1}^{n-1}\frac{1}{1+i\lambda^*}\\
&= \frac{1}{(\lambda^*)^{n-1}} \frac{\Gamma\left(1+ \frac{1}{\lambda^*}\right)}{ \Gamma\left(n  + \frac{1}{\lambda^*}\right)}
\end{align*}
The proof may be completed by applying Stirling's approximation for the Gamma function, 
$ \Gamma(x+1) \sim \displaystyle\sqrt{2 \pi} \exp(-x) x^{(x+ \frac{1}{2})}$.
\end{itemize}

\subsection{Proof of Theorem \ref{ModelB_sizebiased}}
We will use the Athreya-Karlin Embedding. Let $\tilde{N}_k(n)$ denote the number of 
processes with exactly $k$ individuals at time $\tau_n$. Hence, combining Lemma \ref{ModelB_bound} and Theorem \ref{ModelB_degree} , we have,
\begin{eqnarray*}
P(\tilde{d}_v(n+1) - \tilde{d}_v(n)=1,\tilde{d}_v(n)= k|\tilde{\mathscr{F}}_n ) &= \tilde{N}_k(n). \tilde{C}_n \frac{1}{k} \\
&= \frac{\tilde{N}_k(n)}{n}. n\tilde{C}_n \frac{1}{k} \\
&\cas \frac{1}{k\lambda^*} \frac{k\lambda^*}{ k\lambda^* + 1} \displaystyle\prod_{i=1}^{k-1} \frac{1}{i\lambda^* + 1} \\
&= \displaystyle\prod_{i=1}^{k} \frac{1}{1+ i\lambda^*} 
\end{eqnarray*}
We note that the sequence $\tilde{N}_k(n). \tilde{C}_n \frac{1}{k}$ is uniformly bounded and hence uniformly integrable. Taking expectations gives us the result sought.

\subsubsection{Proof of Theorem \ref{ModelB_m}}
From Theorem \ref{embedding_m} we have, for $j=1, \cdots, n$, $d_j(n)$ is distributed identically as $Z_j(\tau_n - \tau_j)$. Also, combining Theorem \ref{purebirth_rate}  and Proposition \ref{modelB_m_rate} ,we have,
\begin{align*}
\frac{Z_i(\tau_n- \tau_i)}{\sqrt{c_n}}  \to \sqrt{2} \quad a.s.\\
\implies \frac{d_i(n)}{\sqrt{c_n}} \to \sqrt{2} \quad a.s. 
\end{align*}
Finally, we note from Proposition \ref{modelB_m_rate}, that $c_n \sim \Theta(m^2 \log n)$. This concludes the proof. 


\section*{Acknowledgment}
The authors would like to thank the anonymous referee for 
her/his valuable remarks which have significantly improved the quality of presentation of the current version of the
work. Many thanks are also due to the editors for their
kind consideration and patience with us. Last but not least, both authors feel honored to be part of this special
series in memory of late Professor K. R. Parthasarathy, to
whom the entire Indian mathematics and statistics community
is in debt. While this work was being carried out Professor
K. R. Parthasarathy and the first author had many discussions
on this new type of random graphs, and his ever insightful
suggestions motivated both the authors immensely.

\bibliographystyle{plain}

\bibliography{De-Pref}

\end{document}